\documentclass{amsart}
\usepackage{amsthm,amscd,amssymb,verbatim,epsf,amsmath,amsfonts,mathrsfs,graphicx}
\usepackage[colorlinks=true,linkcolor=blue,citecolor=blue]{hyperref}

\begin{document}
\theoremstyle{plain}
\newtheorem{Thm}{Theorem}
\newtheorem{Cor}{Corollary}
\newtheorem{Con}{Conjecture}
\newtheorem{Main}{Main Theorem}
\newtheorem{Lem}{Lemma}
\newtheorem{Prop}{Proposition}
\theoremstyle{definition}
\newtheorem{Def}{Definition}
\newtheorem{Note}{Note}
\newtheorem{Ex}{Example}
\theoremstyle{remark}
\newtheorem{notation}{Notation}
\renewcommand{\thenotation}{}
\errorcontextlines=0
\numberwithin{equation}{section}
\renewcommand{\rm}{\normalshape}%

\title[Mean Curvature Flow in Higher Codimension]%
   {Mean Curvature Flow of Compact Spacelike Submanifolds in Higher Codimension}
\author{Brendan Guilfoyle}
\address{Brendan Guilfoyle\\
          School of Science, Technology, Engineering and Mathematics\\
          Institute of Technology, Tralee \\
          Clash \\
          Tralee  \\
          Co. Kerry \\
          Ireland.}
\email{brendan.guilfoyle@ittralee.ie}
\author{Wilhelm Klingenberg}
\address{Wilhelm Klingenberg\\
 Department of Mathematical Sciences\\
 University of Durham\\
 Durham DH1 3LE\\
 United Kingdom}
\email{wilhelm.klingenberg@durham.ac.uk }

\begin{abstract}
We prove long-time existence for mean curvature flow of a smooth $n$-dimensional spacelike submanifold of an $n+m$ dimensional manifold whose metric satisfies the timelike curvature condition. 

\end{abstract}
\maketitle
\today
\tableofcontents

In this paper we establish the following result on long-time existence for the evolution by mean curvature flow of compact spacelike submanifolds of indefinite manifolds:

\vspace{0.1in}

\begin{Thm}\label{t:1}
Let $\Sigma_0$ be a smooth compact $n$-dimensional spacelike submanifold of an $n+m$ dimensional manifold ${\mathbb M}$ with indefinite metric ${\mathbb G}$ satisfying the timelike curvature condition (\ref{e:tcc}). 

Then there exists a unique family $f_s:\Sigma\rightarrow{{\mathbb M}}$ for $0\leq s< s_0$ of smooth compact $n$-dimensional spacelike submanifolds satisfying the initial value problem
\[
\frac{d f}{ds}=H \qquad\qquad f_0(\Sigma)=\Sigma_0,
\]
where $H$ is the mean curvature vector associated to the immersion $f_s$ in $({\mathbb M},{\mathbb G})$.

Moreover, if $f_s(\Sigma)$ remains in a smooth compact region of ${\mathbb M}$ for $0 \le s <s_0$, then $f_s$ may be extended beyond $s_0$.
\end{Thm}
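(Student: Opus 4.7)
The plan is to treat short-time existence as a standard result for a quasilinear parabolic system, then deduce the extension statement from a continuation argument based on uniform a priori estimates in the compact region. For existence, I would use the normal exponential map of $(\mathbb{M},\mathbb{G})$ to parametrize nearby spacelike submanifolds: in a tubular neighborhood of $\Sigma_0$, any $C^1$-small perturbation may be written uniquely as $\exp^\perp(u)$ for a section $u$ of the normal bundle $N\Sigma_0$, with spacelikeness automatically preserved for small $\|u\|_{C^1}$. Substituting $f_s=\exp^\perp(u(\cdot,s))$ into the mean curvature flow equation yields a quasilinear evolution equation of schematic form
\[
\partial_s u = g^{ij}(u,\partial u)\,\nabla_i\nabla_j u + \text{lower order terms},
\]
where $g^{ij}$ is the inverse of the induced metric on the graph of $u$. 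Because this induced metric is Riemannian along spacelike submanifolds, the principal symbol acts as $-|\xi|_g^2\,\mathrm{Id}$ on the normal components of $u$, so the system is strongly parabolic. Short-time existence and uniqueness then follow from standard quasilinear parabolic theory, combined with a DeTurck-type modification to handle the tangential diffeomorphism ambiguity intrinsic to the unnormalised equation $\partial_s f=H$.

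For the extension, let $[0,s_0)$ be the maximal existence interval and assume $f_s(\Sigma)\subset K$ for a fixed compact $K\subset\mathbb{M}$. A standard continuation argument reduces the statement to uniform $C^\infty$ estimates up to $s_0$. I would obtain these in three stages: (i) \emph{quantitative preservation of spacelikeness}, i.e.\ a uniform lower bound on the positivity of the induced metric, so that parabolicity does not degenerate; (ii) a uniform bound on the squared norm $|A|^2$ of the second fundamental form, derived from a Simons-type evolution equation in which the timelike curvature condition (\ref{e:tcc}) supplies the favorable sign of the ambient curvature contribution; and (iii) higher derivative bounds by bootstrapping through Schauder estimates applied to the linearised flow. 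Compactness of $K$ together with (\ref{e:tcc}) furnishes the uniform control of ambient curvatures and injectivity radius needed at each step.

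The main obstacle is step (i): because the mean curvature vector is timelike, there is no direct reason the induced metric should not degenerate as $s\nearrow s_0$. I expect the argument to identify a scalar quantity — a higher-codimension analogue of the function $v=-\mathbb{G}(\nu,T)$ used in the Lorentzian hypersurface case, built either from the Gauss map into the Grassmannian of spacelike $n$-planes or from the spacelike volume form — whose evolution under the flow satisfies a maximum-principle-friendly differential inequality
\[
\partial_s\phi \le \Delta_\Sigma\phi + c(x,s)\,\phi,
\]
in which the timelike curvature condition guarantees the correct sign of the reaction coefficient along the compact region $K$. Once such a $\phi$ is bounded away from its degenerate value by the parabolic maximum principle, steps (ii) and (iii) become higher-codimension adaptations of Huisken's classical estimates, and the flow extends past $s_0$ by reapplying short-time existence with $f_{s_0}$ as initial datum.
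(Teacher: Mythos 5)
Your outline matches the paper's strategy: short-time existence from quasilinear parabolic theory, a gradient (tilt) estimate to rule out degeneration of the induced metric, a second-fundamental-form bound from a Simons-type evolution equation with the timelike curvature condition supplying the favorable ambient curvature sign, and a bootstrap for higher derivatives. You also correctly guess the relevant scalar: the paper takes $v^2 = V^{\alpha\beta}V_{\alpha\beta}$ with $V_{\alpha\beta} = -\mathbb{G}(\nu_\alpha, T_\beta)$, exactly the higher-codimension tilt function built from the normal frame against a fixed timelike reference frame $\{T_\alpha\}$.

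Where the proposal has a genuine gap is at the heart of your step (i). The evolution of $v$ does \emph{not} reduce to a linear reaction inequality $\partial_s v - \Delta v \le c\,v$; the right-hand side in Proposition \ref{p:flowuv}, after estimating the Lie-derivative and connection terms, is of order $v^4$ (see (\ref{e:eq11})), so a plain parabolic maximum principle together with Gronwall cannot close. The missing idea, due to Bartnik and adapted parabolically by Ecker--Huisken, is to work with the test function $h_K = v\exp(Ku)$ for a suitably large constant $K$, where $u = \sum_\alpha u_\alpha$ is the sum of the height functions. At a first interior maximum of $h_K$ one substitutes $\nabla v = -Kv\nabla u$; the favorable term $-K^2 v|\nabla u|^2$, combined with the lower bound $|\nabla u|^2 \ge \min_\alpha\psi_\alpha^{-2}(v^2-m)$ from Proposition \ref{p:slice} and the inequality (\ref{e:ineq2a}) relating $\sum_\alpha V_\alpha^2\lambda_\alpha^2$ to $|\nabla v|^2$, absorbs the superlinear terms in $v$ and produces a contradiction for $K$ large. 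This coupling of the tilt function with the height function is precisely Proposition \ref{p:gradest} and is the main technical content of the paper; without it your plan cannot close step (i). A minor remark: the paper asserts short-time existence directly for the quasilinear parabolic system via Schauder estimates and contraction, without explicitly invoking a DeTurck modification, though your route is equally admissible.
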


\vspace{0.1in}

The critical ingredient of the proof is a gradient estimate - Proposition \ref{p:gradest} in this paper - and originally proven in the stationary case by Robert Bartnik in his 1983 thesis, see \cite{Bart}. 

Mean curvature flow of spacelike hypersurfaces in indefinite spaces has been studied previously, for example \cite{Eck} \cite{EaH} \cite{thorpe}, as has higher codimension mean curvature flow in definite spaces \cite{AandS} \cite{CaL} \cite{LaS} \cite{Sm} \cite{Wang2}.  

Here, our method is to extend the work in \cite{EaH} to higher codimension. We are generally interested in open manifolds, as there are well-known topological obstructions to the existence of indefinite metrics on compact manifolds, for example see \cite{Mats91}. It is worth noting that longtime existence in the case of codimension 1 has more recently been established without the timelike convergence condition \cite{gerhardt}. 

Mean curvature flow has found many applications, for example, probing the existence of special Lagrangian submanifolds in Calabi-Yau manifolds \cite{TaY} and of holomorphic curves in Einstein 4-manifolds \cite{CaL}. These applications arise since such submanifolds minimize area in their homology classes and therefore deforming by mean curvature flow is a natural method for finding minimizers \cite{SaW}. 

The flow  has also been used to find ``nice'' maps between two Riemannian $n$-manifolds, by flowing graphs in the product $n+n$-manifold. This has been considered both for definite and indefinite products, for example see \cite{LaL} \cite{LaS}. 

Flowing submanifolds of indefinite (rather than Riemannian) spaces can be better behaved for flowing by mean curvature.  This has been seen to be the case in the case of spacelike hypersurfaces \cite{EaH}, and now, by virtue of Theorem \ref{t:1}, in higher co-dimension. 

The motivating context of the current work is that of invariant metrics on spaces of oriented geodesics, which are often of indefinite signature \cite{agk} \cite{gg} \cite{gk}. Interestingly, special Lagrangian submanifolds in indefinite geodesic spaces have been considered from a stationary point of view recently \cite{anc}.

The result is stated as generally as possible, the specific long-time behaviour of the flow being dependent upon the particular context. The conditions introduced are mild enough to hold, for example, for indefinite product spaces, as well as warped products with a compact factor. 

The additional ingredient required for convergence would be the construction of barriers, which would depend upon more detailed information about the ambient manifold. 

In the next section we discuss a number of examples where spacelike higher co-dimension mean curvature arises. The following three sections introduce the background material, while Section \ref{s4} contains the proof of the gradient estimate. The final section contains the proof of Theorems \ref{t:1}.

\vspace{0.1in}

\section{Flows in indefinite manifolds}

\begin{Ex}[Spaces of Oriented Geodesics]

Spaces of oriented geodesics of symmetric spaces often admit canonical indefinite metrics \cite{agk}. Consider the collection ${\mathbb L}({\mathbb R}^3)$ of oriented geodesics of Euclidean 3-space, which may be identified with the total space of the tangent bundle to the 2-sphere.

This non-compact 4-manifold admits a canonical metric ${\mathbb G}_{2,2}$ of signature $(2,2)$, which, up to a spherical summand, is unique \cite{salvai}. This metric is K\"ahler, with compatible complex and symplectic structures, and is scalar flat, although it is not Einstein \cite{gk}. 

An oriented smooth surface in ${\mathbb R}^3$ gives rise, through its oriented normal lines, to a smooth surface in ${\mathbb L}({\mathbb R}^3)$. This surface is Lagrangian and the induced metric is either Lorentz or degenerate, where the degeneracy occurs precisely at the umbilic points of the surface.

Theorem \ref{t:1} arose in the context of co-dimension two mean curvature flow in ${\mathbb L}({\mathbb R}^3)$ as one element of the proof of the Carath\'eodory Conjecture on the number of umbilic points on a closed convex sphere. This involves flowing a spacelike disc with boundary lying on a Lagrangian surface and therefore requires additional boundary estimates \cite{gak}.

Spacelike surfaces in ${\mathbb L}({\mathbb R}^3)$ may also be characterized as foliations of the underlying space \cite{gs} and mean curvature flow would be a natural way of deforming such geodesic foliations. Theorem \ref{t:1} gives interior estimates for such deformations. 
\end{Ex}
\vspace{0.1in}
\begin{Ex}[Product Manifolds]
Given the indefinite product metric ${\mathbb G}=g_1-g_2$ on a product ${\mathbb M}=M_1\oplus M_2$ of $n$- and $m$-dimensional Riemannian manifolds, one can consider the mean curvature flow of an $n$-dimensional spacelike sub-manifold. 

This was carried out in \cite{LaS}, where long-time existence and convergence is established for products in which the sectional curvatures satisfy $K_2\leq K_1$. For $n=m=2$ this is equivalent to the timelike curvature condition.

\end{Ex}
\vspace{0.1in}

\begin{Ex}[A Geometric Quasi-linear Navier-Stokes Flow]

Consider the total space of the tangent bundle $T{\mathbb R}^n$ of Euclidean $n$-space, together with its natural projection $\pi:T{\mathbb R}^n\rightarrow {\mathbb R}^n$. This $2n$-manifold admits a flat metric of signature $(n,n)$ defined as follows. By definition
\[
T{\mathbb R}^n=\{(p,V)\;|\:p\in{\mathbb R}^n\qquad V\in T_p{\mathbb R}^n\}.
\]
Let $(x^1,x^2,...,x^n)$ be flat coordinates on ${\mathbb R}^n$ and for any $V\in T_p{\mathbb R}^n$ define conjugate coordinates $(\dot{x}^1,\dot{x}^2,...,\dot{x}^n)$ by
\[
V=\sum_{i=1}^n\dot{x}^i\frac{\partial}{\partial x^i}.
\]
Define the neutral metric ${\mathbb G}_{(n,n)}$ in terms of the coordinates $(x^1,x^2,...,x^n,\dot{x}^1,\dot{x}^2,...,\dot{x}^n)$ on $T{\mathbb R}^3$ by
\[
ds^2=\sum_{i=1}^ndx^id\dot{x}^i.
\]

A vector field on Euclidean 3-space is a section of the bundle $\pi:T{\mathbb R}^3\rightarrow {\mathbb R}^3$, that is, a map ${\mathcal V}:{\mathbb R}^3\rightarrow {\mathbb R}^3$ such that $\pi\circ {\mathcal V}=Id$. Denote by $\tilde{\mathbb G}$ the metric induced on ${\mathcal  V}$  by the canonical metric ${\mathbb G}_{(3,3)}$ on $T{\mathbb R}^3$.

We are interested in flowing 3-dimensional spacelike submanifolds. Examples of such can be found by considering the vector field given on ${\mathbb R}^3-(0,0,0)$ by
\[
V=H(R)\frac{\partial}{\partial R},
\]
where $R$ is the distance to the origin (the source). Such a vector field gives rise to a metric $\tilde{\mathbb G}$ that has the following signature:
\begin{center}
\begin{tabular}{ c |c c c}
  & $H'<0$ & $H'=0$ & $H'>0$\\  \hline
$H<0$& -3 & $(0,2)$ & $(1,2)$ \\ 
$H=0$& $(0,1)$ & $0$ & $(1,0)$ \\ 
$H>0$& $(2,1)$ & $(2,0)$ & +3. 
\end{tabular}
\end{center}
Thus, we get spacelike submanifolds when $H>0$ and $H'>0$.

The mean curvature vector of the embedded 3-manifold is easily computed to be
\[
\tilde{\mathbb H}=-\frac{RHH''+2RH'^2-2HH'}{\sqrt{2}2RH(H')^2}\left(\frac{\partial}{\partial R}-H'\frac{\partial}{\partial \dot{R}}\right).
\]
Co-dimension 3 mean curvature flow of these vector fields is determined by the single equation  
\[
\frac{\partial H}{\partial t}=\frac{RHH''+2RH'^2-2HH'}{2RHH'}.
\]
This is a quasi-linear Navier-Stokes equation for the vector field: a second order reaction-diffusion equation with convection and a pressure source given by the gradient of the Gauss map.

Moreover, the timelike curvature condition holds and we can apply Theorem \ref{t:1} in this setting for interior estimates.
\end{Ex}

\vspace{0.1in}

\section{Immersed spacelike submanifolds}\label{s:setting}

Let ${\mathbb M}$ be an $n+m-$dimensional manifold endowed with a metric ${\mathbb G}$ of signature ($n,m$). Throughout we use the summation convention on repeated indices, except for the quantity $\psi_\alpha$, defined below. In some instances we include summation signs for clarity. Note that raising and lowering normal indices (Greek indices) changes the sign of the component, while raising and lowering tangent indices (Latin indices) does not change the sign. For notational convenience we will use $<\cdot,\cdot>$ interchangeably with ${\mathbb G}(\cdot,\cdot)$.

We will use throughout a {\it multi-time function} $t:{\mathbb M}\rightarrow {\mathbb R}^m$ of maximal rank with
components $t_\alpha$ for $\alpha=1,...,m$ such that
\[
{\mathbb G}(\overline{\nabla}t_\alpha,\overline{\nabla}t_\alpha)<0 \qquad\qquad \forall \alpha=1,...,m,
\]
and $\{\overline{\nabla}t_\alpha\}_1^m$ form a mutually orthogonal basis for a timelike plane, where all geometric quantities associated with ${\mathbb G}$ will be denoted with a bar. This may only be locally defined, but can be patched over 
compact sets.

In particular, given a manifold with metric of signature $(n,m)$, we can choose local coordinates $(x^i,y^\alpha)$ such that $\frac{\partial }{\partial x^i}$ are spacelike and $\frac{\partial }{\partial y^\alpha}$ are timelike. Then the local functions $t_\alpha:p\mapsto y^\alpha(p)$ are multi-time functions.
\vspace{0.1in}

\begin{Def}
The manifold $({\mathbb M},{\mathbb G})$ is said to satisfy the {\it timelike curvature condition} if, for any spacelike $n$-plane 
$P$ at a point in ${\mathbb M}$, the Riemann curvature tensor satisfies
\begin{equation}\label{e:tcc}
\sum_{i=1}^n {\mathbb G}(\overline{R}(X,\tau_i)X,\tau_i)\;\geq k \;{\mathbb G}(X,X),
\end{equation}
for some positive constant $k$, where $\{\tau_i\}_{i=1}^n$ form an orthonormal basis for $P$ and $X$ is any timelike vector orthogonal to $P$. Here we use the following convention for the Riemann curvature tensor
\[
\overline{R}(X,Y)Z=-\overline{\nabla}_X\overline{\nabla}_YZ+\overline{\nabla}_Y\overline{\nabla}_XZ+\overline{\nabla}_{[X,Y]}Z,
\]
for vector fields $X,Y,Z$.
\end{Def}
\vspace{0.1in}

\begin{Note}
Definition 1 generalizes the codimension one {\it timelike convergence condition} of General Relativity, employed for example in \cite{EaH}:
\[
\overline{R}ic(X,X)\geq 0.
\]
\end{Note}

\vspace{0.1in}

Fix an orthonormal frame on (${\mathbb M},{\mathbb G}$):
\[
\{e_i,T_\alpha\}_{i,\alpha=1}^{n,m} \qquad \mbox{ s.t.}\qquad {\mathbb G}(e_i,e_j)=\delta_{ij}
\qquad {\mathbb G}(T_\alpha,T_\beta)=-\delta_{\alpha\beta} \qquad {\mathbb G}(e_i,T_\alpha)=0,
\]
with
\[
T_\alpha=-\psi_\alpha \overline{\nabla}t_\alpha
\qquad\qquad \psi_\alpha^{-2}=-{\mathbb G}(\overline{\nabla}t_\alpha,\overline{\nabla}t_\alpha).
\]

\begin{Def}\label{d:norm}
Given a contravariant tensor $B$ on ${\mathbb M}$ we define its norm by
\[
\|B\|^2=\sum_{i_1,...,i_l=1}^n [B(e_{i_1},e_{i_2},...,e_{i_l})]^2+\sum_{\beta_1,...,\beta_l=1}^m [B(T_{\beta_1},T_{\beta_2},...,T_{\beta_l})]^2.
\]
Similarly, for a covariant tensor $B$ we dualize with the metric ${\mathbb G}$ and define its norm as above. Note that this is not the usual Hilbert-Schmidt inner product on multi-linear functions, as it depends on the choice of an orthonormal frame.

Higher derivative norms are also defined:
\[
\|B\|^2_k=\sum_{j=0}^k\|\overline{\nabla}^jB\|^2.
\]

For a mixed tensor, we occasionally use the induced metric on the spacelike components to define a norm on the timelike components. That is, if $B_{\alpha\beta ijk}$ is a tensor of the indicated type, then we define
\[
|B_{\alpha\beta}|^2=\sum_{i,j,k=1}^n \|B_{\alpha\beta}(e_i,e_j,e_k)\|^2.
\]
\end{Def}

Let $f:\Sigma\rightarrow {\mathbb M}$ be a spacelike immersion of an $n$-dimensional manifold $\Sigma$, and let $g$ be the metric induced on $\Sigma$ by ${\mathbb G}$. 

\begin{Def}\label{d:adptframe}
A second orthonormal frame $\{\tau_{i},\nu_\alpha\}$ for (${\mathbb M},{\mathbb G}$) along $\Sigma$ is {\it adapted} to the submanifold if:
\[
\{\tau_i,\nu_\alpha\}_{i,\alpha=1}^{n,m} \qquad \mbox{ s.t.}\qquad {\mathbb G}(\tau_i,\tau_j)=\delta_{ij}
\qquad {\mathbb G}(\nu_\alpha,\nu_\beta)=-\delta_{\alpha\beta} \qquad {\mathbb G}(\tau_i,\nu_\alpha)=0,
\]
where $\{\tau_i\}_{i=1}^{n}$ form an orthonormal basis for ($\Sigma,g$), and $\{\nu_\alpha\}_{\alpha=1}^m$ span the normal space.

\end{Def}

The {\it second fundamental form} of the immersion is
\[
A_{ij\alpha}={\mathbb G}(\overline{\nabla}_{\tau_i}\nu_\alpha,\tau_j)=-{\mathbb G}(\overline{\nabla}_{\tau_i}\tau_j,\nu_\alpha),
\]
while the {\it mean curvature vector} is
\[
H_\alpha=g^{ij}A_{ij\alpha}.
\]
We have the following two equations for the splitting of the
connection
\begin{equation}\label{e:connsplit1}
\overline{\nabla}_{\tau_i}\tau_j=\nabla_{\tau_i}\tau_j-A_{ij}^\alpha\nu_\alpha
\end{equation}
\begin{equation}\label{e:connsplit2}
\overline{\nabla}_{\tau_i}\nu_\alpha=A^j_{i\alpha}\tau_j+C_{i\alpha}^\beta\nu_\beta,
\end{equation}
where $\nabla$ is the induced connection and $C_{i\alpha}^\beta$ are the components of the normal
connection
\begin{equation}\label{e:normconn}
\overline{\nabla}^\bot_{\tau_i}\nu_\alpha=C_{i\alpha}^\beta\nu_\beta.
\end{equation}

\section{Multi-angles}\label{s:anglesgen}

We now consider how to use orthonormal frames to define a matrix of angles between two spacelike $n$-planes in an $n+m$-manifold.

For frames $\{e_{i},T_\alpha\}$ and $\{\tau_{i},\nu_\alpha\}$ as above, introduce the notation
\[
X_{ij}={\mathbb G}(\tau_i,e_j)
\quad
W_{i\beta}={\mathbb G}(\tau_i,T_\beta)
\quad
U_{\alpha j}=-{\mathbb G}(\nu_\alpha,e_j)
\quad
V_{\alpha\beta}=-{\mathbb G}(\nu_\alpha,T_\beta).
\]
Thus
\[
e_i=X_{ji}\tau_j+U_{\alpha i}\nu_\alpha
\quad
T_\beta=W_{i\beta}\tau_i+V_{\alpha\beta}\nu_\alpha,
\]
and the $(n+m)\times (n+m)$ dimensional matrix
\[
M=\left(
     \begin{array}{cc}
      X & W \\
      -U & -V
     \end{array}\right),
\]
is an element of the indefinite orthogonal group $O(n,m)$.

\begin{Prop}
With notation as above, the $O(n,m)$ condition on M reads
\begin{equation}\label{e:onm1a}
X^TX=I_n+U^TU
\qquad
V^TV=I_m+W^TW
\qquad
U^TV=X^TW.
\end{equation}
\end{Prop}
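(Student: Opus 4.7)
The plan is to derive the three identities directly from the defining relation $M^T\eta M = \eta$ for the indefinite orthogonal group $O(n,m)$, where $\eta = \mathrm{diag}(I_n,-I_m)$ is the matrix of ${\mathbb G}$ in the orthonormal frame $\{e_i,T_\alpha\}$. Writing $M$ in block form, I would first compute $\eta M$, which simply flips the sign of the lower block row, and then multiply on the left by $M^T$. The resulting $2\times 2$ block matrix has diagonal blocks $X^TX - U^TU$ and $W^TW - V^TV$, and off-diagonal block $X^TW - U^TV$. Setting these equal to the corresponding blocks of $\eta$ reads off the three claimed identities.

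A more geometric route, which avoids appealing to the classical $O(n,m)$ relation, is to exploit the expansions
\[
e_i = X_{ji}\tau_j + U_{\alpha i}\nu_\alpha, \qquad T_\beta = W_{i\beta}\tau_i + V_{\alpha\beta}\nu_\alpha
\]
already displayed just above the statement. Taking pairwise inner products of these and invoking the orthonormality relations for $\{\tau_i,\nu_\alpha\}$ — crucially with ${\mathbb G}(\nu_\alpha,\nu_\beta) = -\delta_{\alpha\beta}$ — converts the three orthonormality conditions ${\mathbb G}(e_i,e_j)=\delta_{ij}$, ${\mathbb G}(T_\alpha,T_\beta)=-\delta_{\alpha\beta}$, and ${\mathbb G}(e_i,T_\beta)=0$ into the three displayed matrix identities in the same order.

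There is no substantive obstacle here; the only subtlety is bookkeeping the sign contributions coming from the timelike directions, which is precisely the reason the minus signs appear in the lower block row of $M$ and why the $V^TV$ identity carries the opposite sign structure from the $X^TX$ identity. In particular, one should be careful that the minus sign absorbed into the definitions $U_{\alpha j}=-{\mathbb G}(\nu_\alpha,e_j)$ and $V_{\alpha\beta}=-{\mathbb G}(\nu_\alpha,T_\beta)$ is consistent with the block entries of $M$, so that no stray signs appear in the final identities.
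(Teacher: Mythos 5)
Your first route is exactly the paper's proof, which simply cites the defining relation $M^T \mathrm{diag}(I_n,-I_m) M = \mathrm{diag}(I_n,-I_m)$ and reads off the blocks; your block computation is correct and your attention to the sign conventions in $U$, $V$ and the lower block row of $M$ is well placed. The second, more geometric route you sketch is a straightforward unpacking of the same relation and adds nothing essentially different.
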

\begin{proof}
This follows from the requirement that
\[
M^T\left(
     \begin{array}{cc}
      I_n & 0 \\
       0 & -I_m
     \end{array}\right)M=\left(
     \begin{array}{cc}
      I_n & 0 \\
       0 & -I_m
     \end{array}\right).
\]
\end{proof}

The vectors $\{\tau_i\}_1^n$ span the tangent space of $\Sigma$, while $\{\nu_\alpha\}_1^m$ span the normal bundle. We are free to rotate these frames within these two spaces, and this corresponds to left action of $O(n)$ and $O(m)$ on $O(n,m)$.

Similarly, we consider rotations of $\{e_i\}_1^n$ that preserve the $n$-dimensional vector space that they span, along with rotations of $\{T_\beta\}_1^m$ that preserves the $m$-dimensional space they span. These correspond to right actions of $O(n)$ and $O(m)$ within $O(n,m)$. Note that the positive definite norm in Definition \ref{d:norm} is preserved by these rotations.

\begin{Prop}\label{p:gauge}
By rotations of the frames $\{e_i,T_\alpha\}$ and $\{\tau_j,\nu_\beta\}$, which preserve the tangent and normal bundles of $\Sigma$ as well as the tensor norm of Definition \ref{d:norm}, we can simplify the matrix $M\in O(n,m)$ for $n\geq m$ to
\[
M=\left(
     \begin{array}{ccc}
      I_{n-m} & 0 & 0\\
       0 &  D_1 & \pm D_4A^T \\
       0 & AD_3 & D_2
     \end{array}\right),
\]
where $A\in O(m)$, $D_1$, $D_2$, $D_3$ and $D_4$ are diagonal matrices satisfying
\[
D_1^2=I_m+D_3^2
\qquad
D_2^2=I_m+D_4^2
\qquad
|D_1|^2=|D_2|^2,
\]
and $\pm$ of a diagonal matrix means a free choice of sign on the entries of the matrix. 

The case $n<m$ has a similar decomposition with $n$ and $m$ interchanged in the above formulae.
\end{Prop}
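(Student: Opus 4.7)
The plan is to exploit the three $O(n,m)$ relations \eqref{e:onm1a} together with the $[O(n)\times O(m)]^2$ gauge freedom identified in the paragraphs preceding the proposition: left rotations act on $M$ by $\mathrm{diag}(P,Q)\cdot M$ with $(P,Q)\in O(n)\times O(m)$ (preserving the tangent and normal bundles of $\Sigma$), and right rotations by $M\cdot\mathrm{diag}(R,S)$ with $(R,S)\in O(n)\times O(m)$ (preserving the spans of $\{e_i\}$ and $\{T_\alpha\}$). The reduction proceeds by pinning down $X$ first, reading off the forms of $U$, $W$ and $V$ from the algebraic relations, and finally cleaning up the bottom-right $m\times m$ block with the residual gauge.

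The key initial observation is that $X^TX-I_n=U^TU$ is positive semidefinite of rank at most $m$, so under the assumption $n\geq m$ the matrix $X^TX$ has at least $n-m$ eigenvalues equal to $1$. A right $O(n)$ rotation diagonalizes $X^TX$ as $\mathrm{diag}(I_{n-m},D_1^2)$ with $D_1$ diagonal and $D_1^2\geq I_m$, and the orthogonal factor of the polar decomposition of $X$, used as a left $O(n)$ rotation, then brings $X$ itself to $\mathrm{diag}(I_{n-m},D_1)$. Setting $D_3:=\sqrt{D_1^2-I_m}$, the relation $U^TU=X^TX-I_n$ forces the first $n-m$ columns of $U$ to vanish and the remaining $m\times m$ block $\widetilde U$ to satisfy $\widetilde U^T\widetilde U=D_3^2$; polar decomposition yields $\widetilde U=AD_3$ with $A\in O(m)$, so $-U=(0,AD_3)$.

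Substituting these block forms into $X^TW=U^TV$ forces the top $n-m$ rows of $W$ to vanish and gives $W_{\mathrm{bot}}=-D_1^{-1}D_3A^TV$. Plugging this into $V^TV=I_m+W^TW$ and using the identity $D_3^2D_1^{-2}=I_m-D_1^{-2}$ (immediate from $D_1^2=I_m+D_3^2$) produces $V^TAD_1^{-2}A^TV=I_m$, whence $D_1^{-1}A^TV\in O(m)$ and $V=AD_1\Omega$ for some $\Omega\in O(m)$, with consequent $W_{\mathrm{bot}}=-D_3\Omega$.

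Finally, the residual $O(m)\times O(m)$ acting by left multiplication on rows $n+1,\ldots,n+m$ and right multiplication on columns $n+1,\ldots,n+m$ transforms $A\mapsto QA$ and $\Omega\mapsto\Omega S$. An SVD of $V$ fixes $(Q,S)$ so that the transformed $V$ is diagonal (called $-D_2$), after which the polar decomposition argument applied on the other side---using the dual relation $W_{\mathrm{bot}}W_{\mathrm{bot}}^T=D_1^2-I_m=D_3^2$ derived from $XX^T=I_n+WW^T$---puts $W_{\mathrm{bot}}$ in the form $\pm D_4A^T$ with $D_4$ diagonal and $D_4^2=D_2^2-I_m$. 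The constraint $|D_1|^2=|D_2|^2$ follows from the trace identity $\mathrm{tr}(U^TU)=\mathrm{tr}(W^TW)$ obtained by comparing $M^TJM=J$ with $MJM^T=J$, where $J=\mathrm{diag}(I_n,-I_m)$. The main obstacle is the careful bookkeeping required to ensure that the same matrix $A\in O(m)$ appears in both the bottom-left block $AD_3$ and the top-right block $\pm D_4A^T$ after all gauge steps; the case $n<m$ follows by symmetry, interchanging the roles of the spacelike and timelike frames throughout.
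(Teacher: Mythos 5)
Your proof is correct and takes a genuinely different, and in fact tighter, route than the paper's. The paper diagonalizes $X$ and $V$ independently via SVDs (spending all four rotation groups), then shows that the orthogonal factors of $U_1$ and $W_1$ must satisfy $A=\pm B^T$ by comparing the two conjugation identities $AD_1^{-2}D_3^2A^T=D_2^{-2}D_4^2$ and $BD_2^{-2}D_4^2B^T=D_1^{-2}D_3^2$; this needs the eigenvalues of $D_1^{-2}D_3^2$ to be simple to conclude, a genericity hypothesis the paper glosses over, and it only yields $|D_1|^2=|D_2|^2$ via a trace argument. You instead diagonalize only $X$, then read off from the $O(n,m)$ relations that $V=AD_1\Omega$ for orthogonal $A,\Omega$ (so the singular values of $V$ already equal those of $X_1$), and only \emph{then} spend the $O(m)\times O(m)$ gauge on $V$. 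This is exactly the Cartan $KAK$ decomposition of $O(n,m)$ with $K=O(n)\times O(m)$, and your choice $Q=A^T$, $S=\Omega^T$ in fact yields $A'=I_m$, $\Omega'=I_m$, hence $V'=D_1$ and $W'_{\mathrm{bot}}=D_3$. So you achieve $D_1=D_2$, $D_3=D_4$ and $A=I_m$ exactly — a strictly stronger normal form than the proposition, which only asserts $|D_1|^2=|D_2|^2$ and a generic $A\in O(m)$. Your last paragraph underplays this: the ``bookkeeping'' worry about having the \emph{same} $A$ in both off-diagonal blocks is moot once you notice $A$ can be set to the identity, and invoking a ``polar decomposition argument on the other side'' for $W_{\mathrm{bot}}$ is unnecessary since you already have the explicit expression $W_{\mathrm{bot}}=D_3\Omega$ with $\Omega S=I_m$. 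One thing worth stating explicitly: the polar factorization $\widetilde U=AD_3$ is not unique when some entry of $D_3$ vanishes (i.e.\ some $(D_1)_{ii}=1$), but any choice works, so this is harmless.
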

\begin{proof}
Consider first the matrix $X_{ij}=<\tau_i,e_j>$. The matrix $X^TX$ is symmetric and non-negative definite and so it has a well-defined square root, namely a symmetric $n\times n$ matrix which we denote by $\sqrt{X^TX}$. By the first equation of (\ref{e:onm1a}), $X$ is invertible since det$(X)\ge 1$ and so we can define the $n\times n$ matrix $A=\sqrt{X^TX}X^{-1}$. Then
\[
A^TI_nA=(X^{-1})^T\sqrt{X^TX}\sqrt{X^TX}X^{-1}=(X^{-1})^TX^TXX^{-1}=I_n,
\]
so that $A\in O(n)$. Define a new frame by $\{A_{ij}\tau_j,\nu_\alpha\}$ and then
\[
\tilde{X}_{ij}=A_{ik}<\tau_k,e_j>=\sqrt{X^TX}X^{-1}X=\sqrt{X^TX},
\]
which is symmetric. Now we can act on both the left and right of $\tilde{X}$ by $O(n)$ to diagonalize it.

A similar argument yields a diagonalization of $V_{\alpha\beta}$.

After diagonalization of $X$, the first of equations (\ref{e:onm1a}) implies that the matrix $U^TU$ is diagonal. Thus the $n$ $m$-dimensional vectors $\{U_{\alpha i}\nu_\alpha\}_{i=1}^n$ are mutually orthogonal and, since $n\geq m$, we conclude that $n-m$ of these vectors must be zero.

After a reordering of the basis elements, the matrix $M$ then decomposes into
\[
M=\left(
     \begin{array}{ccc}
      I_{n-m} & 0 & W_2\\
       0 & X_1 & W_1 \\
       0 & U_1 & V
     \end{array}\right).
\]
The last of equations (\ref{e:onm1a}) now implies that $W_2=0$ and we reduce the problem to the square case:
\[
X_1^TX_1=I_m+U_1^TU_1
\qquad
V^TV=I_m+W_1^TW_1
\qquad
U_1^TV=X_1^TW_1.
\]

In fact, to indicate that $X_1$ and $V$ are diagonal, let us write $X_1=D_1$ and $V=D_2$. Thus
\begin{equation}\label{e:onm1}
D_1^2=I_m+U_1^TU_1,
\end{equation}
\begin{equation}\label{e:onm2}
D_2^2=I_m+W_1^TW_1,
\end{equation}
\begin{equation}\label{e:onm3}
U_1^TD_2=D_1W_1.
\end{equation}
Equations (\ref{e:onm1}) and (\ref{e:onm2}) imply that there exists diagonal matrices $D_3$ and $D_4$ (with entries defined up to a sign) such that
\[
U_1=AD_3 \qquad W_1=BD_4 \qquad {\mbox{ for some }}A,B\in O(m).
\]
Thus equations (\ref{e:onm1}), (\ref{e:onm2}) and (\ref{e:onm3}) now read
\begin{equation}\label{e:onm4}
D_1^2=I_m+D_3^2,
\end{equation}
\begin{equation}\label{e:onm5}
D_2^2=I_m+D_4^2,
\end{equation}
\begin{equation}\label{e:onm6}
D_3A^TD_2=D_1BD_4.
\end{equation}
Taking the transpose of this last equation, multiplying across by the inverses of $D_1$ and $D_2$ (which exist by equations (\ref{e:onm4}) and (\ref{e:onm5})), and multiplying back on the right hand-side we find that
\begin{equation}\label{e:onm7}
AD_1^{-2}D_3^2A^T=D_2^{-2}D_4^2.
\end{equation}
Similarly
\[
BD_2^{-2}D_4^2B^T=D_1^{-2}D_3^2,
\]
and so $A=\pm B^T$.

Moreover, if $A\in O(m)$ conjugates a diagonal matrix to a diagonal matrix, then $A$ must permute the diagonal elements. 
Denote the  diagonal elements of $D_1$, $D_2$, $D_3$ and $D_4$ by $\lambda_i$, $\mu_i$, $a_i$ and $b_i$, respectively, where $i=n-m+1,...,n$.
Then equations (\ref{e:onm4}), (\ref{e:onm5}) and (\ref{e:onm7}) read
\[
\lambda_i^2=1+a_i^2
\qquad
\mu_i^2=1+b_i^2
\qquad
\mu_i^2a_i^2=\lambda_{p(i)}^2b_{p(i)}^2,
\]
where $p$ is the permutation of $(n-m+1,...,n)$ determined by $A$. Combining these three equations we get
\[
a_i^2+a_i^2b_i^2=b_{p(i)}^2+a_{p(i)}^2b_{p(i)}^2,
\]
which when summed yields
\[
\sum_ia_i^2=\sum_ib_i^2
\qquad\quad
{\mbox{and}}
\qquad\quad
\sum_i\lambda_i^2=\sum_i\mu_i^2.
\]
Thus $|D_1|^2=|D_2|^2$, where for any diagonal matrix $D$, $|D|^2=tr(D^2)$.

\end{proof}

\vspace{0.1in}

\begin{Def}
The function $v$ is defined to be
\[
v^2=V^{\alpha\beta}V_{\alpha\beta},
\]
where $V^{\alpha\beta}=-{\mathbb G}(\nu^\alpha,T^\beta)$, with respect to the dual coframes $\{e^{i},T^\alpha\}$ and $\{\tau^{i},\nu^\alpha\}$. This is a generalization of the {\it tilt function} in the case of codimension one appearing in  \cite{Bart}.
\end{Def}

\vspace{0.1in}

We now use the normal form to construct estimates for the norm of the adapted frames in terms of $v$:

\vspace{0.1in}

\begin{Prop}\label{p:frameest}
For an adapted frame $\{\tau_i,\nu_\alpha\}$ we have
\[
\|\tau_i\|^2\leq n(n+2) v^2 \qquad \|\nu_\alpha\|^2\leq 2mv^2,
\]
for all $i=1,2,...,n$ and $\alpha=1,2,...,m$.
\end{Prop}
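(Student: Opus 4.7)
The plan is to expand each adapted frame vector in the ambient orthonormal frame $\{e_j, T_\beta\}$, read off the norm using Definition \ref{d:norm}, and then estimate using the $O(n,m)$ identities in (\ref{e:onm1a}). Starting from the decompositions $e_i = X_{ji}\tau_j + U_{\alpha i}\nu_\alpha$ and $T_\beta = W_{i\beta}\tau_i + V_{\alpha\beta}\nu_\alpha$, the orthogonality relations combined with $\mathbb{G}(T_\beta,T_\beta)=-1$ give the inverse expansions
\[
\tau_i = X_{ij}\, e_j - W_{i\beta}\, T_\beta, \qquad \nu_\alpha = -U_{\alpha j}\, e_j + V_{\alpha\beta}\, T_\beta,
\]
and hence, by Definition \ref{d:norm} (applied to a vector by first dualizing with $\mathbb{G}$),
\[
\|\tau_i\|^2 = \sum_j X_{ij}^2 + \sum_\beta W_{i\beta}^2, \qquad \|\nu_\alpha\|^2 = \sum_j U_{\alpha j}^2 + \sum_\beta V_{\alpha\beta}^2.
\]

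Next I would use the unit-vector conditions $\langle \tau_i,\tau_i\rangle=1$ and $\langle \nu_\alpha,\nu_\alpha\rangle=-1$, which read $\sum_j X_{ij}^2 - \sum_\beta W_{i\beta}^2 = 1$ and $\sum_\beta V_{\alpha\beta}^2 - \sum_j U_{\alpha j}^2 = 1$. Substituting into the formulas above collapses each norm to a single sum:
\[
\|\tau_i\|^2 = 1 + 2\sum_\beta W_{i\beta}^2, \qquad \|\nu_\alpha\|^2 = -1 + 2\sum_\beta V_{\alpha\beta}^2.
\]
To control the remaining sums in terms of $v$, I take the trace of the $O(n,m)$ identity $V^T V = I_m + W^T W$ from (\ref{e:onm1a}), which yields $v^2 = m + \sum_{j,\beta} W_{j\beta}^2$. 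Since all summands are nonnegative, the row sum is bounded by the total, giving $\sum_\beta W_{i\beta}^2 \le v^2 - m$; similarly, any column sum satisfies $\sum_\beta V_{\alpha\beta}^2 \le v^2$.

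Combining these estimates with the trivial bound $v^2 \ge m \ge 1$, I conclude
\[
\|\tau_i\|^2 \le 1 + 2(v^2-m) \le 2v^2 + 1 \le 3 v^2 \le n(n+2)\, v^2,
\]
using $n(n+2)\ge 3$ for $n\ge 1$, and
\[
\|\nu_\alpha\|^2 \le 2 v^2 - 1 \le 2 v^2 \le 2m\, v^2,
\]
using $m\ge 1$. The stated inequalities are in fact far from tight—the argument yields the cleaner bounds $\|\tau_i\|^2 \le 3v^2$ and $\|\nu_\alpha\|^2 \le 2v^2$. Because everything reduces to elementary algebra, the only genuine obstacle is bookkeeping: one must carefully apply the sign rule that raising a timelike (Greek) index changes the sign of a component, so that $v^2 = V^{\alpha\beta}V_{\alpha\beta} = \sum_{\alpha,\beta} V_{\alpha\beta}^2$ is manifestly nonnegative, and one must correctly insert the minus signs coming from $\mathbb{G}(T_\beta,T_\beta)=-1$ in the expansions of $\tau_i$ and $\nu_\alpha$.
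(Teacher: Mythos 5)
Your argument is correct and takes a genuinely more elementary route than the paper's. The paper first invokes Proposition~\ref{p:gauge} to bring $M$ to normal form, then controls the $O(n)$ and $O(m)$ rotations relating the given adapted frame to the normal-form one by bounding the rotation entries by $1$ and applying Cauchy--Schwarz to the resulting inner sums; this is where the constants $n(n+2)$ and $2m$ come from. You bypass the normal form entirely: expanding $\tau_i$ and $\nu_\alpha$ in the background frame, the unit-vector normalizations $\langle\tau_i,\tau_i\rangle=1$ and $\langle\nu_\alpha,\nu_\alpha\rangle=-1$ collapse each two-term norm to a single sum, and that sum is then controlled by the trace of $V^TV=I_m+W^TW$ from (\ref{e:onm1a}), which gives $\sum_\beta W_{i\beta}^2\le v^2-m$ and $\sum_\beta V_{\alpha\beta}^2\le v^2$. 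The payoff is the sharper, dimension-independent bounds $\|\tau_i\|^2\le 2v^2-1$ and $\|\nu_\alpha\|^2\le 2v^2-1$ (using $v^2\ge m\ge 1$), from which the stated bounds follow since $n(n+2)\ge 3$ and $2m\ge 2$. So your derivation is both simpler --- it has no logical dependence on Proposition~\ref{p:gauge} --- and tighter than the one in the paper.
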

\begin{proof}
Any adapted frame $\{\tau_i,\nu_\alpha\}$ can be related by rotations  $A\in O(n)$ and $B\in O(m)$ to an 
adapted frame $\{\mathring{\tau}_i,\mathring{\nu}_\alpha\}$ for which, with respect to an orthonormal background  basis $\{\mathring{e}_i,\mathring{T}_\alpha\}$, the matrix $M$ has the form given in Proposition \ref{p:gauge}. 

That is,
\[
\tau_i=A_i^j\mathring{\tau}_j
\qquad
\nu_\alpha=B_\alpha^\beta\mathring{\nu}_\beta.
\]
Then
\begin{align}
\|\tau_i\|^2&=\sum_j({\mathbb G}(\tau_i,\mathring{e}_j))^2+\sum_\alpha({\mathbb G}(\tau_i,\mathring{T}_\alpha))^2\nonumber\\
&=\sum_j\left[\sum_kA_i^k{\mathbb G}(\mathring{\tau}_k,\mathring{e}_j)\right]^2
    +\sum_\alpha\left[\sum_kA_i^k{\mathbb G}(\mathring{\tau}_k,\mathring{T}_\alpha)\right]^2 \nonumber\\
&\leq\sum_j\left[\sum_k|A_i^k|\;|{\mathbb G}(\mathring{\tau}_k,\mathring{e}_j)|\right]^2
    +\sum_\alpha\left[\sum_k|A_i^k|\;|{\mathbb G}(\mathring{\tau}_k,\mathring{T}_\alpha)|\right]^2\nonumber\\
&\leq\sum_j\left[\sum_k\;|{\mathbb G}(\mathring{\tau}_k,\mathring{e}_j)|\right]^2
    +\sum_\alpha\left[\sum_k\;|{\mathbb G}(\mathring{\tau}_k,\mathring{T}_\alpha)|\right]^2\nonumber\\
&=\sum_j\left[\sum_k\;|X_{kj}|\right]^2+\sum_\alpha\left[\sum_k\;|W_{k\alpha}|\right]^2\nonumber\\
&\leq n\sum_{j,k}\;|X_{jk}|^2+n\sum_{\alpha,k}|W_{k\alpha}|^2\nonumber\\
&= n(n-m+|D_1|^2)+n|D_4|^2\nonumber\\
&= n(n-m+v^2)+n(v^2-m)\nonumber\\
&\leq n(n+2)v^2\nonumber.
\end{align}

Similarly for $\nu_\alpha$:
\begin{align}
\|\nu_\beta\|^2&=\sum_j({\mathbb G}(\nu_\beta,\mathring{e}_j))^2+\sum_\alpha({\mathbb G}(\nu_\beta,\mathring{T}_\alpha))^2\nonumber\\
&=\sum_j\left[\sum_\gamma B_\beta^\gamma{\mathbb G}(\mathring{\nu}_\gamma,\mathring{e}_j)\right]^2
    +\sum_\alpha\left[\sum_\gamma B_\beta^\gamma{\mathbb G}(\mathring{\nu}_\gamma,\mathring{T}_\alpha)\right]^2 \nonumber\\
&\leq m\sum_{\gamma, j}\;|U_{\gamma j}|^2+m\sum_{\alpha,\gamma}|V_{\alpha\gamma}|^2\nonumber\\
&= m(|D_3|^2+v^2)\nonumber\\
&= m(|D_1|^2-m+v^2)\nonumber\\
&\leq 2mv^2\nonumber.
\end{align}

\end{proof}
\vspace{0.1in}

\section{The height functions}
Let $u_\alpha:\Sigma\rightarrow {\mathbb R}$ be the {\it height function} $u_\alpha=t_\alpha\circ f$. We now prove

\vspace{0.1in}
\begin{Prop}\label{p:slice}
For all $\alpha=1,...,m$ we have
\[
\nabla u_\alpha=\overline{\nabla}t_\alpha+\psi^{-1}_\alpha\sum_\beta V_{\beta\alpha}\nu_\beta,
\]
\[
\nabla u_\alpha\cdot\nabla u_\beta=\psi^{-1}_\alpha\psi^{-1}_\beta\left(\sum_\gamma V_{\gamma\alpha}V_{\gamma\beta}-\delta_{\alpha\beta}\right).
\]
\end{Prop}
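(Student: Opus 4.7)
The plan is to decompose $\overline{\nabla}t_\alpha$ along the adapted orthonormal frame $\{\tau_i,\nu_\beta\}$, identify $\nabla u_\alpha$ with its tangential component, and then apply the $O(n,m)$ relations from Proposition 1 to compute the pairwise inner products.

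First, since $u_\alpha = t_\alpha\circ f$ and $g$ is the restriction of ${\mathbb G}$ to $T\Sigma$, for any $X\in T\Sigma$ we have $g(\nabla u_\alpha,X)=X(u_\alpha)=X(t_\alpha)={\mathbb G}(\overline{\nabla}t_\alpha,X)$. Hence $\nabla u_\alpha$ is exactly the tangential projection of $\overline{\nabla}t_\alpha$ onto $T\Sigma$. On the other hand, the defining relation $T_\alpha=-\psi_\alpha\overline{\nabla}t_\alpha$ gives $\overline{\nabla}t_\alpha=-\psi_\alpha^{-1}T_\alpha$, and Section \ref{s:anglesgen} expands
\[
T_\alpha=W_{i\alpha}\tau_i+V_{\beta\alpha}\nu_\beta .
\]
Splitting this into tangent and normal parts, I read off
\[
\nabla u_\alpha=-\psi_\alpha^{-1}W_{i\alpha}\tau_i,\qquad (\overline{\nabla}t_\alpha)^{\perp}=-\psi_\alpha^{-1}V_{\beta\alpha}\nu_\beta,
\]
so $\nabla u_\alpha=\overline{\nabla}t_\alpha+\psi_\alpha^{-1}\sum_\beta V_{\beta\alpha}\nu_\beta$, which is the first identity. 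The only sign check to be careful about here is the one arising from $T_\alpha=-\psi_\alpha\overline{\nabla}t_\alpha$ and from the sign convention $V_{\alpha\beta}=-{\mathbb G}(\nu_\alpha,T_\beta)$; everything else is bookkeeping against the decomposition of $T_\alpha$.

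For the second identity, I compute directly using the tangential expression: since $g={\mathbb G}|_{T\Sigma}$ and the $\tau_i$ are $g$-orthonormal,
\[
\nabla u_\alpha\cdot\nabla u_\beta=\psi_\alpha^{-1}\psi_\beta^{-1}\sum_{i}W_{i\alpha}W_{i\beta}=\psi_\alpha^{-1}\psi_\beta^{-1}(W^TW)_{\alpha\beta}.
\]
Now I invoke the middle $O(n,m)$ relation from (\ref{e:onm1a}), namely $V^TV=I_m+W^TW$, which rearranges to $(W^TW)_{\alpha\beta}=\sum_\gamma V_{\gamma\alpha}V_{\gamma\beta}-\delta_{\alpha\beta}$. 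Substituting yields the second identity.

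The proof is essentially a single bookkeeping exercise, so there is no genuinely hard step. The only point that requires attention is ensuring the sign conventions for the normal components (dualizing with $-{\mathbb G}$) are applied consistently; once the decomposition of $T_\alpha$ in the adapted frame is written down correctly, the first formula is immediate and the second is a one-line application of the $O(n,m)$ identity $V^TV-W^TW=I_m$.
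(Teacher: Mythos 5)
Your proof is correct. For the first identity you make the (correct) observation that $\nabla u_\alpha$ is the tangential projection of $\overline{\nabla}t_\alpha$ onto $T\Sigma$ — the paper simply asserts this formula without comment — and then split $\overline{\nabla}t_\alpha=-\psi_\alpha^{-1}T_\alpha$ along $\{\tau_i,\nu_\beta\}$; this is exactly the paper's route. For the second identity you diverge slightly: you read off $\nabla u_\alpha=-\psi_\alpha^{-1}W_{i\alpha}\tau_i$ and compute the inner product as $\psi_\alpha^{-1}\psi_\beta^{-1}(W^TW)_{\alpha\beta}$, then substitute the $O(n,m)$ relation $V^TV=I_m+W^TW$ from (\ref{e:onm1a}). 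The paper instead uses the rewriting $\nabla u_\alpha=\psi_\alpha^{-1}\bigl(\sum_\beta V_{\beta\alpha}\nu_\beta-T_\alpha\bigr)$ and expands the inner product directly from the orthonormality of $\{T_\alpha\}$ and $\{\nu_\beta\}$ together with the definition $V_{\gamma\beta}=-{\mathbb G}(\nu_\gamma,T_\beta)$. Both are one-line computations and the final answer is the same; the only trade-off is that your version leans on the earlier $O(n,m)$ proposition (making the identity $V^TV-W^TW=I_m$ do the bookkeeping for you), while the paper's is self-contained but requires a careful by-hand sign check of the four cross terms. There is no gap either way.
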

\begin{proof}
From the definition of $u_\alpha$ and $T_\alpha$ we have 
\[
\nabla u_\alpha=\overline{\nabla}t_\alpha+\psi^{-1}_\alpha\sum_\beta V_{\beta\alpha}\nu_\beta
=\psi^{-1}_\alpha\left(\sum_\beta V_{\beta\alpha}\nu_\beta-T_\alpha\right),
\]
and so
\begin{align}
\nabla u_\alpha\cdot\nabla u_\beta=&\psi^{-1}_\alpha\psi^{-1}_\beta\;{\mathbb G}\left(\sum_\gamma V_{\gamma\alpha}\nu_\gamma-T_\alpha,
                      \sum_\delta V_{\delta\beta}\nu_\delta-T_\beta\right)\nonumber\\
&=\psi^{-1}_\alpha\psi^{-1}_\beta\left(\sum_\gamma V_{\gamma\alpha}V_{\gamma\beta}-\delta_{\alpha\beta}\right)\nonumber.
\end{align}
as claimed.
\end{proof}

\begin{Prop}\label{p:laps}
\[
\triangle u_\gamma=-\psi_\gamma^{-1}V_{\alpha\gamma}H^\alpha+g^{ij}\overline{\nabla}_i\overline{\nabla}_jt_\gamma.
\]
\begin{align}
\triangle V_{\alpha\beta}=& V_{\gamma\beta}(A_{ij\gamma}A^{ij}_\alpha+<\overline{\mbox {R}}(\tau_i,\nu_\gamma)\tau_i,\nu_\alpha>)
      -\overline{\nabla}^\bot _{T_\beta}H_\alpha-A^{ij}_\alpha T_\beta(g_{ij})\nonumber\\
&\qquad +{\textstyle{\frac{1}{2}}}(\overline{\nabla}{\mathcal{L}}_{T_\beta}{\mathbb G})(\nu_\alpha,\tau_i,\tau_i)
    -(\overline{\nabla}{\mathcal{L}}_{T_\beta}{\mathbb G})(\tau_i,\nu_\alpha,\tau_i)-(\overline{\nabla}T_\beta)(H,\nu_\alpha)\nonumber\\
&\qquad -2C_{i\alpha}^{\;\;\;\;\gamma}<\nu_\gamma,\overline{\nabla}_{T_\beta}\tau_i>+(\nabla_{\tau_i} C_{i\alpha}^{\;\;\;\;\gamma}
    +C_{i\alpha}^{\;\;\;\;\delta}C_{i\delta}^{\;\;\;\;\gamma})V_{\gamma\beta}\nonumber,
\end{align}
where $\triangle$ is the Laplacian of the induced metric given by $\triangle =g^{ij}\nabla_{\tau_i}\nabla_{\tau_j}$ and $\overline{\nabla}^\bot $ is the normal connection, as defined in equation (\ref{e:normconn}).
\end{Prop}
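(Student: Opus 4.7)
For tangent vectors $\tau_i$ on $\Sigma$ one has $\tau_i(u_\gamma)=\tau_i(t_\gamma)$, so the Gauss formula (\ref{e:connsplit1}) gives the pointwise difference
\[
\nabla^2 u_\gamma(\tau_i,\tau_j)-\overline{\nabla}^2 t_\gamma(\tau_i,\tau_j)=(\overline{\nabla}_{\tau_i}\tau_j-\nabla_{\tau_i}\tau_j)(t_\gamma)=-A_{ij}^{\;\;\alpha}\nu_\alpha(t_\gamma).
\]
Because $T_\gamma=-\psi_\gamma\overline{\nabla}t_\gamma$ and $\langle\nu_\alpha,T_\gamma\rangle=-V_{\alpha\gamma}$, the normal derivative evaluates as $\nu_\alpha(t_\gamma)=\psi_\gamma^{-1}V_{\alpha\gamma}$. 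Tracing with $g^{ij}$ and using $g^{ij}A_{ij}^{\;\;\alpha}=H^\alpha$ yields the first identity.

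\textbf{The identity for $\triangle V_{\alpha\beta}$.} This is a long Simons-type calculation that I would organise in three steps. First I would differentiate $V_{\alpha\beta}=-\langle\nu_\alpha,T_\beta\rangle$ once, using (\ref{e:connsplit2}), to obtain
\[
\nabla_{\tau_i}V_{\alpha\beta}=-A_{i\alpha}^{\;\;j}W_{j\beta}+C_{i\alpha}^{\;\;\gamma}V_{\gamma\beta}-\langle\nu_\alpha,\overline{\nabla}_{\tau_i}T_\beta\rangle,
\]
then differentiate a second time and contract with $g^{ij}$, producing three structural blocks to simplify.

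\textbf{Handling the blocks.} The block from $-g^{ij}\nabla_{\tau_i}(A_{j\alpha}^{\;\;k}W_{k\beta})$ expands via the Codazzi equation (which converts the divergence of the second fundamental form into $\nabla^\bot H_\alpha$ modulo ambient curvature) and writes $W_{k\beta}\tau_k$ as $T_\beta$ minus its normal component $-V_{\gamma\beta}\nu_\gamma$; this simultaneously produces the mean-curvature term $-\overline{\nabla}^\bot_{T_\beta}H_\alpha$, the Simons term $V_{\gamma\beta}A_{ij\gamma}A^{ij}_{\;\;\alpha}$, and the induced-metric correction $-A^{ij}_\alpha T_\beta(g_{ij})$ that arises when the derivative falls on a $W$-factor. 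The block from $-g^{ij}\nabla_{\tau_i}\langle\nu_\alpha,\overline{\nabla}_{\tau_j}T_\beta\rangle$ brings in the ambient Hessian $\overline{\nabla}^2T_\beta$; symmetrising and antisymmetrising the two tangent slots via $(\mathcal L_{T_\beta}{\mathbb G})(X,Y)=\langle\overline{\nabla}_XT_\beta,Y\rangle+\langle X,\overline{\nabla}_YT_\beta\rangle$, and invoking the ambient Ricci identity (with the paper's sign convention for $\overline{R}$) to move $\nu_\alpha$ past a tangent slot, extracts the two Lie-derivative terms $\tfrac12(\overline{\nabla}\mathcal L_{T_\beta}{\mathbb G})(\nu_\alpha,\tau_i,\tau_i)$ and $-(\overline{\nabla}\mathcal L_{T_\beta}{\mathbb G})(\tau_i,\nu_\alpha,\tau_i)$, the curvature contribution $V_{\gamma\beta}\langle\overline{R}(\tau_i,\nu_\gamma)\tau_i,\nu_\alpha\rangle$, and the residual coupling $-(\overline{\nabla}T_\beta)(H,\nu_\alpha)$. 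The third block, $g^{ij}\nabla_{\tau_i}(C_{j\alpha}^{\;\;\gamma}V_{\gamma\beta})$ together with the cross-terms $C_{i\alpha}^{\;\;\gamma}\langle\nu_\gamma,\overline{\nabla}_{\tau_i}T_\beta\rangle$ generated along the way, collapses after one more application of (\ref{e:connsplit1})--(\ref{e:connsplit2}) into $-2C_{i\alpha}^{\;\;\gamma}\langle\nu_\gamma,\overline{\nabla}_{T_\beta}\tau_i\rangle+(\nabla_{\tau_i}C_{i\alpha}^{\;\;\gamma}+C_{i\alpha}^{\;\;\delta}C_{i\delta}^{\;\;\gamma})V_{\gamma\beta}$.

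\textbf{Main obstacle.} The underlying manipulations are routine Ricci calculus, but the real difficulty lies in bookkeeping: the final expression has eight distinct tensorial pieces, and the answer must match a specific arrangement in which the symmetric and antisymmetric components of $\overline{\nabla}^2T_\beta$ have been repackaged as Lie derivatives of ${\mathbb G}$ along $T_\beta$. Particular care is required with the paper's non-standard sign in the definition of $\overline{R}$ and with the convention that raising or lowering a normal index flips a sign, since a single miscount would misplace the curvature coefficient of $V_{\gamma\beta}\langle\overline{R}(\tau_i,\nu_\gamma)\tau_i,\nu_\alpha\rangle$.
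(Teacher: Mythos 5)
Your proposal is correct and follows essentially the same route as the paper: differentiate $V_{\alpha\beta}=-\langle\nu_\alpha,T_\beta\rangle$ twice using (\ref{e:connsplit2}), organise the trace into the $A$-block (Codazzi), the $\overline{\nabla}T_\beta$-block (curvature commutation and Lie-derivative repackaging), and the $C$-block, exactly as the paper does following Bartnik's Proposition~2.1. The only presentational device you omit is the paper's choice of a geodesic frame at $p$ with $[T_\beta,\tau_i]=0$ (and the corresponding Lemma~\ref{l:lie}), which streamlines the bookkeeping you rightly flag as the main obstacle but does not change the substance of the calculation.
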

\begin{proof}
The first statement follows from a straightforward generalization of the codimension one case \cite{EaH}.

For the second statement we follow Proposition 2.1 of Bartnik \cite{Bart}, fix a point $p\in\Sigma$ and choose an orthonormal frame $\{\tau_i\}$ on $\Sigma$ such that $(\nabla_{\tau_i}\tau_j)(p)=0$. Extend this frame to a neighbourhood of $\Sigma$ satisfying ${\mathcal L}_{T_\beta}\tau_i=[T_\beta,\tau_i]=0$ for a fixed $\beta$. Then (here, there is no summation over $\alpha$ or $\beta$, but there is over other repeated indices)
\begin{align}
-\triangle V_{\alpha\beta}&= \triangle <\nu_\alpha,T_\beta>=\overline{\nabla}_{\tau_i}\overline{\nabla}_{\tau_i}<\nu_\alpha,T_\beta>\nonumber\\
&= \overline{\nabla}_{\tau_i}(<\overline{\nabla}_{\tau_i}\nu_\alpha,T_\beta>+<\nu_\alpha,\overline{\nabla}_{\tau_i}T_\beta>)\nonumber\\
&= \overline{\nabla}_{\tau_i}(A_{i\alpha}^j<\tau_j,T_\beta>+C_{i\alpha}^\gamma<\nu_\gamma,T_\beta>+<\nu_\alpha,\overline{\nabla}_{\tau_i}T_\beta>).\nonumber
\end{align}
Computing each of the three terms in turn,
\[
\overline{\nabla}_{\tau_i}(A_{i\alpha}^j<\tau_j,T_\beta>)=\overline{\nabla}_{\tau_i}A_{i\alpha}^j<\tau_j,T_\beta>+A_{i\alpha}^j(<\overline{\nabla}_{\tau_i}\tau_j,T_\beta>+<\tau_j,\overline{\nabla}_{\tau_i}T_\beta>),
\]
and by the Gauss equation we have that
\[
\overline{\nabla}_{\tau_i}A_{i\alpha}^j=\overline{\nabla}_{\tau_j}H_\alpha-<\overline{R}(\tau_i,\tau_j)\nu_\alpha,\tau_i>-A_{ij}^\gamma C_{i\gamma}^\alpha +H^\gamma C_{j\gamma}^\alpha.
\]
For the second term we have
\[
\overline{\nabla}_{\tau_i}(C_{i\alpha}^\gamma<\nu_\gamma,T_\beta>)=<\nu_\gamma,T_\beta>\overline{\nabla}_{\tau_i}C_{i\alpha}^\gamma+C_{i\alpha}^\gamma(<\overline{\nabla}_{\tau_i}\nu_\gamma,T_\beta>+<\nu_\gamma,\overline{\nabla}_{\tau_i}T_\beta>),
\]
while for the third term
\[
\overline{\nabla}_{\tau_i}(<\nu_\alpha,\overline{\nabla}_{\tau_i}T_\beta>)=-<\overline{R}(\tau_i,T_\beta)\tau_i,\nu_\alpha>+<\nu_\alpha,\overline{\nabla}_{T_\beta}\overline{\nabla}_{\tau_i}\tau_i> +<\overline{\nabla}_{\tau_i}\nu_\alpha,\overline{\nabla}_{T_\beta}\tau_i>,
\]
where we have used the fact that $[T_\beta,\tau_i]=\overline{\nabla}_{T_\beta}\tau_i-\overline{\nabla}_{\tau_i}T_\beta=0$, and commuted the second derivatives, which brings in the curvature term. Assembling the three terms yields
\begin{align}
-\triangle V_{\alpha\beta}&= -<\overline{R}(\tau_i,T_\beta)\tau_i,\nu_\alpha>+<\nu_\alpha,\overline{\nabla}_{T_\beta}\overline{\nabla}_{\tau_i}\tau_i>
  +<\overline{\nabla}_{\tau_i}\nu_\alpha,\overline{\nabla}_{T_\beta}\tau_i>\nonumber\\
&\qquad\quad +(\overline{\nabla}_{\tau_j}H_\alpha-<\overline{R}(\tau_i,\tau_j)\nu_\alpha,\tau_i>-A_{ij}^\gamma C_{i\gamma}^\alpha
  +H^\gamma C_{j\gamma}^\alpha)<\tau_j,T_\beta>\nonumber\\
&\qquad\quad +C_{i\alpha}^\gamma(<\overline{\nabla}_{\tau_i}\nu_\gamma,T_\beta>+<\nu_\gamma,\overline{\nabla}_{\tau_i}T_\beta>)
  +<\nu_\gamma,T_\beta>\overline{\nabla}_{\tau_i}C_{i\alpha}^\gamma \nonumber\\
&\qquad\quad +A_{i\alpha}^j(<\overline{\nabla}_{\tau_i}\tau_j,T_\beta>+<\tau_j,\overline{\nabla}_{\tau_i}T_\beta>)\nonumber\\
&= <\overline{R}(\tau_i,\nu_\gamma)\tau_i,\nu_\alpha><\nu_\gamma,T_\beta>+<\nu_\alpha,\overline{\nabla}_{T_\beta}\overline{\nabla}_{\tau_i}\tau_i>
  +2A_{i\alpha}^j<\tau_j,\overline{\nabla}_{T_\beta}\tau_i>\nonumber\\
&\qquad\quad +2C_{i\alpha}^\gamma<\nu_\gamma,\overline{\nabla}_{\tau_i}T_\beta>+<\tau_i,T_\beta>\overline{\nabla}_{\tau_i}H_\alpha
  +H^\gamma C_{i\gamma\alpha}<\tau_i,T_\beta>\nonumber\\
&\qquad\quad +A_{\alpha}^{ij} A_{ij}^\gamma<\nu_\gamma,T_\beta>+C_{i\alpha}^{\gamma} C_{i\gamma}^\delta<\nu_\delta,T_\beta>
+<\nu_\gamma,T_\beta>\overline{\nabla}_{\tau_i}C_{i\alpha}^\gamma \nonumber\\
&=-V_{\gamma\beta}(A_{ij\gamma}A^{ij}_\alpha+<\overline{\mbox {R}}(\tau_i,\nu_\gamma)\tau_i,\nu_\alpha>)
      +\overline{\nabla}^\bot _{T_\beta}H_\alpha+<\nu_\alpha,\overline{\nabla}_{T_\beta}\overline{\nabla}_{\tau_i}\tau_i>\nonumber\\
&\qquad +A_\alpha^{ij}T_\beta<\tau_i,\tau_j>+2C_{i\alpha}^{\;\;\;\;\gamma}<\nu_\gamma,\overline{\nabla}_{T_\beta}\tau_i>+(\nabla_{\tau_i} C_{i\alpha}^{\;\;\;\;\gamma}-C_{i\alpha}^{\;\;\;\;\delta}C_{i\delta}^{\;\;\;\;\gamma})V_{\gamma\beta}\nonumber.
\end{align}
The second equality uses the fact that
\begin{align}
-<\overline{R}(\tau_i,T_\beta)\tau_i,\nu_\alpha>&=-<\overline{R}(\tau_i,\tau_j)\tau_i,\nu_\alpha><\tau_j,T_\beta>+<\overline{R}(\tau_i,\nu_\gamma)\tau_i,\nu_\alpha><\nu_\gamma,T_\beta>\nonumber\\
&=<\overline{R}(\tau_i,\tau_j)\nu_\alpha,\tau_i><\tau_j,T_\beta>+<\overline{R}(\tau_i,\nu_\gamma)\tau_i,\nu_\alpha><\nu_\gamma,T_\beta>,\nonumber
\end{align}
and, as per equation (\ref{e:connsplit2}), the substitution
\[
\overline{\nabla}_{\tau_i}\nu_\alpha=A^j_{i\alpha}\tau_j+C_{i\alpha}^\gamma\nu_\gamma,
\]
while by equation (\ref{e:connsplit1}) and the assumption $(\nabla_{\tau_i}\tau_j)(p)=0$, we utilize
\[
\overline{\nabla}_{\tau_i}\tau_j=\nabla_{\tau_i}\tau_j-A_{ij}^\gamma\nu_\gamma=-A_{ij}^\gamma\nu_\gamma.
\]
The final equality comes from gathering terms and using the definition of $V_{\alpha\beta}$. We now use the following:

\begin{Lem}\label{l:lie}
\begin{align}
T_\beta<\tau_i,\overline{\nabla}_{\tau_i}\nu_\alpha>&=-<\overline{\nabla}_{\tau_i}\tau_i,\overline{\nabla}_{T_\beta}\nu_\alpha>+
   {\textstyle{\frac{1}{2}}}(\overline{\nabla}{\mathcal{L}}_{T_\beta}{\mathbb G})(\nu_\alpha,\tau_i,\tau_i)\nonumber\\
  &\qquad  -(\overline{\nabla}{\mathcal{L}}_{T_\beta}{\mathbb G})(\tau_i,\nu_\alpha,\tau_i)-<\overline{\nabla}_HT_\beta,\nu_\alpha>\nonumber.
\end{align}
\end{Lem}
\begin{proof}
The proof of this follows the codimension one case (Proposition 2.1 of \cite{Bart}).
\end{proof}

To complete the proof of the Proposition, note that
\begin{align}
<\nu_\alpha,\overline{\nabla}_{T_\beta}\overline{\nabla}_{\tau_i}\tau_i>&=T_\beta<\nu_\alpha,\overline{\nabla}_{\tau_i}\tau_i>
     -<\overline{\nabla}_{T_\beta}\nu_\alpha,\overline{\nabla}_{\tau_i}\tau_i>\nonumber\\
&=-T_\beta<\overline{\nabla}_{\tau_i}\nu_\alpha,\tau_i>-<\overline{\nabla}_{T_\beta}\nu_\alpha,\overline{\nabla}_{\tau_i}\tau_i>\nonumber\\
&=-{\textstyle{\frac{1}{2}}}(\overline{\nabla}{\mathcal{L}}_{T_\beta}{\mathbb G})(\nu_\alpha,\tau_i,\tau_i)
    +(\overline{\nabla}{\mathcal{L}}_{T_\beta}{\mathbb G})(\tau_i,\nu_\alpha,\tau_i)+<\overline{\nabla}_HT_\beta,\nu_\alpha>\nonumber,
\end{align}
where in the last equality we have used Lemma \ref{l:lie}. Substituting this in the second equation of Proposition \ref{p:laps} then yields the result.

\end{proof}

\vspace{0.1in}

\section{The initial value problem}\label{s4}

Let $f_s:\Sigma\rightarrow{{\mathbb M}}$  for $s\in[0,s_0)$ be a family of compact $n$-dimensional spacelike immersed submanifold in an $n+m$-dimensional manifold ${\mathbb M}$ with a metric ${\mathbb G}$ of signature $(n,m)$. In addition, we assume that $n\geq m$. The case $n< m$ follows by similar arguments.

Then $f_s$ moves by parameterized mean curvature flow if it satisfies the following initial value problem:

\vspace{0.1in}
{\it
Let $f_s:\Sigma\rightarrow{{\mathbb M}}$ be a family of spacelike immersed submanifolds satisfying
\[
\frac{d f}{ds}=H,
\]
with initial conditions 
\[
f_0(\Sigma)=\Sigma_0,
\] 
where $H$ is the mean curvature vector associated with the immersion $f_s$ in $({\mathbb M},{\mathbb G})$, and $\Sigma_0$ is some given initial compact $n$-dimensional spacelike immersed submanifold.
}
\vspace{0.1in}

The evolution of the functions $u_\gamma$ and $v$ is then given by:

\begin{Prop}\label{p:flowuv}
\begin{equation}\label{e:uflow}
\left(\frac{d}{ds}-\triangle\right)u_\gamma=-g^{ij}\overline{\nabla}_i\overline{\nabla}_jt_\gamma,
\end{equation}
\begin{align}
v\left(\frac{d}{ds}-\triangle\right) v\leq& -V^{\alpha\beta}V_{\gamma\beta}
      (A_{ij\gamma}A^{ij}_\alpha+<\overline{\mbox {R}}(\tau_i,\nu_\gamma)\tau_i,\nu_\alpha>)+A^{ij}_\alpha {\mathcal L}_{T_\beta}g_{ij}V^{\alpha\beta}\nonumber\\
&\qquad -{\textstyle{\frac{1}{2}}}(\overline{\nabla}{\mathcal{L}}_{T_\beta}{\mathbb G})(\nu_\alpha,\tau_i,\tau_i)V^{\alpha\beta}
    +(\overline{\nabla}{\mathcal{L}}_{T_\beta}{\mathbb G})(\tau_i,\nu_\alpha,\tau_i)V^{\alpha\beta}\nonumber\\
&\qquad +2C_{i\alpha}^{\;\;\;\;\gamma}<\nu_\gamma,\overline{\nabla}_{T_\beta}\tau_i>V^{\alpha\beta}
    -C_{i\alpha}^{\;\;\;\;\delta}C_{i\delta}^{\;\;\;\;\gamma}V_{\gamma\beta}V^{\alpha\beta}\label{e:muflow}.
\end{align}
\end{Prop}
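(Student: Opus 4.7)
The identity for $u_\gamma$ would follow immediately by combining the chain rule with Proposition~\ref{p:laps}. Since $u_\gamma=t_\gamma\circ f_s$ and $\tfrac{df}{ds}=H$, I would compute $\tfrac{du_\gamma}{ds}=dt_\gamma(H)={\mathbb G}(\overline{\nabla}t_\gamma,H)$; using $\overline{\nabla}t_\gamma=-\psi_\gamma^{-1}T_\gamma$ and expanding $H=H^\alpha\nu_\alpha$ in the adapted normal frame, this reduces to $-\psi_\gamma^{-1}V_{\alpha\gamma}H^\alpha$ (with the sign fixed by the timelike-index convention). The first term of the expression for $\triangle u_\gamma$ in Proposition~\ref{p:laps} is precisely $-\psi_\gamma^{-1}V_{\alpha\gamma}H^\alpha$, so the two mean-curvature contributions cancel and the ambient Hessian $-g^{ij}\overline{\nabla}_i\overline{\nabla}_jt_\gamma$ is all that survives.

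For the inequality on $v$, my plan is first to perform a Kato-type reduction. Differentiating $v^2=V^{\alpha\beta}V_{\alpha\beta}$ spatially, one has $2v\,\nabla v=2V^{\alpha\beta}\nabla V_{\alpha\beta}$, and Cauchy--Schwarz gives $|\nabla v|^2\leq\|\nabla V\|^2$. Combining this with $\triangle v^2=2V^{\alpha\beta}\triangle V_{\alpha\beta}+2\|\nabla V\|^2$ and $\tfrac{d}{ds}v^2=2V^{\alpha\beta}\tfrac{d}{ds}V_{\alpha\beta}$ yields
\[
v\!\left(\tfrac{d}{ds}-\triangle\right)\!v = V^{\alpha\beta}\!\left(\tfrac{d}{ds}-\triangle\right)\!V_{\alpha\beta}+|\nabla v|^2-\|\nabla V\|^2 \;\leq\; V^{\alpha\beta}\!\left(\tfrac{d}{ds}-\triangle\right)\!V_{\alpha\beta},
\]
which reduces everything to an upper bound for the right-hand side.

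To compute $\tfrac{d}{ds}V_{\alpha\beta}$, I would fix the evolution of the adapted normal frame by requiring it to be parallel in the normal bundle along the flow, so that $\overline{\nabla}^\bot_H\nu_\alpha=0$. Differentiating $V_{\alpha\beta}=-{\mathbb G}(\nu_\alpha,T_\beta)$ along $s$ then gives $\tfrac{d}{ds}V_{\alpha\beta}=-{\mathbb G}(\overline{\nabla}_H\nu_\alpha,T_\beta)-{\mathbb G}(\nu_\alpha,\overline{\nabla}_HT_\beta)$, in which $\overline{\nabla}_H\nu_\alpha$ is now purely tangential and expressible through the Weingarten relation (\ref{e:connsplit2}). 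Contracting with $V^{\alpha\beta}$ and using $H=H^\alpha\nu_\alpha$ produces a term that matches, with opposite sign, the $-\overline{\nabla}^\bot_{T_\beta}H_\alpha$ contribution to $\triangle V_{\alpha\beta}$ from Proposition~\ref{p:laps}. Substituting the remaining terms of Proposition~\ref{p:laps} into the Kato bound, and rewriting $T_\beta(g_{ij})=\mathcal{L}_{T_\beta}g_{ij}$ under the extension $[T_\beta,\tau_i]=0$ already used in its proof, then yields (\ref{e:muflow}).

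The main obstacle will be the bookkeeping in the last step: carefully tracking the signs arising from the $-\delta_{\alpha\beta}$ in the timelike block of ${\mathbb G}$, and verifying that the time-derivative contribution cancels the $\overline{\nabla}^\bot_{T_\beta}H_\alpha$ piece exactly, so that only the curvature, second-fundamental-form, Lie-derivative and normal-connection terms appearing in (\ref{e:muflow}) remain. Once this cancellation is in hand, the rest is direct substitution together with the Kato reduction.
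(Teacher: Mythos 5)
Your overall strategy mirrors the paper's: for $u_\gamma$ you compute $\tfrac{du_\gamma}{ds}=-\psi_\gamma^{-1}V_{\alpha\gamma}H^\alpha$ and observe it cancels the first term of $\triangle u_\gamma$ from Proposition~\ref{p:laps}; for $v$ you reduce via a Kato-type inequality to $v\left(\tfrac{d}{ds}-\triangle\right)v\le V^{\alpha\beta}\left(\tfrac{d}{ds}-\triangle\right)V_{\alpha\beta}$ and then substitute $\tfrac{d}{ds}V_{\alpha\beta}$ and Proposition~\ref{p:laps}, with the $\overline{\nabla}^\bot_{T_\beta}H_\alpha$ and $(\overline{\nabla}T_\beta)(H,\nu_\alpha)$ pieces cancelling. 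The paper carries out the Kato step by expanding $\nabla\nabla v$ directly and invoking an inequality for $\sum a_k v_k$ against $(\sum a_k^2)(\sum|v_l|^2)$, whereas you work through $v^2$; these are the same Cauchy--Schwarz estimate in two guises, so this is only a cosmetic difference.

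The one step you should tighten is the computation of $\tfrac{d}{ds}V_{\alpha\beta}$. You write that after fixing the normal frame to be parallel along the flow, $\overline{\nabla}_H\nu_\alpha$ is purely tangential and ``expressible through the Weingarten relation~(\ref{e:connsplit2}).'' That relation computes $\overline{\nabla}_{\tau_i}\nu_\alpha$ for \emph{tangential} directions $\tau_i$; it gives no information about the derivative of $\nu_\alpha$ in the normal direction $H$, which is what $\tfrac{D\nu_\alpha}{ds}$ is. The tangential part of $\tfrac{D\nu_\alpha}{ds}$ is obtained instead by differentiating the frame relations $\langle\nu_\alpha,\tau_j\rangle=0$ along the flow and using $\tfrac{D\tau_j}{ds}=\overline{\nabla}_{\tau_j}H$ (since partials commute), which identifies it with the tangential gradient of $H_\alpha$; this is what produces the $-\overline{\nabla}_{T_\beta}H_\alpha$ term that cancels $-\overline{\nabla}^\bot_{T_\beta}H_\alpha$ from $\triangle V_{\alpha\beta}$. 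The paper's proof invokes this as a ``generalization of Proposition 3.1 of \cite{EaH}'' without spelling it out; your write-up would need the correct source for that term. (Also note, though neither you nor the paper flags it, that the remaining $\nabla_{\tau_i}C_{i\alpha}^{\;\;\gamma}V_{\gamma\beta}$ term from Proposition~\ref{p:laps} drops out upon contraction with $V^{\alpha\beta}$ because $C_{i\alpha\gamma}$ is antisymmetric while $V_{\gamma\beta}V^{\alpha\beta}$ is symmetric in $\alpha,\gamma$; and if you do not fix the normal gauge, the extra $\mathfrak{so}(m)$ rotation term in $\tfrac{D\nu_\alpha}{ds}$ likewise vanishes on contraction, so the gauge choice is convenient but not essential.)
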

\begin{proof}
Generalizing Proposition 3.1 of \cite{EaH}, note the time derivatives are
\[
\frac{du_\gamma}{ds}=-\psi_\gamma^{-1}V_{\alpha\gamma}H^\alpha,
\]
\[
\frac{dV_{\alpha\beta}}{ds}=-\overline{\nabla}_{T_\beta}H_\alpha-H^\gamma<\overline{\nabla}_{\nu_\gamma}T_\beta,\nu_\alpha>.
\]
This last equation follows from
\begin{align}
\frac{dV_{\alpha\beta}}{ds}=&-\frac{d{\mathbb G}(\nu_\alpha,T_\beta)}{ds}\nonumber\\
=&-\frac{d{\mathbb G}}{ds}(\nu_\alpha,T_\beta)-{\mathbb G}(\frac{d\nu_\alpha}{ds},T_\beta)-{\mathbb G}(\nu_\alpha,\frac{dT_\beta}{ds})\nonumber\\
=&-\overline{\nabla}_{T_\beta}H_\alpha-{\mathbb G}(\nu_\alpha,\overline{\nabla}_{H}T_\beta).\nonumber
\end{align}
The flow of $u_\gamma$ then follows immediately from Proposition \ref{p:laps}.

The evolution of the tilt function $v^2 = V^{\alpha\beta}V_{\alpha\beta}$  note that, since
\[
v=\sqrt{\sum_{\alpha,\beta} V_{\alpha\beta}V_{\alpha\beta}}
\]
we have
\[
\nabla v=\frac{1}{v}\sum_{\alpha,\beta} V_{\alpha\beta}\nabla V_{\alpha\beta} \qquad \frac{d}{ds}V_{\beta}=\frac{1}{v}\sum_{\alpha,\beta} V_{\alpha\beta}\frac{d}{ds} V_{\alpha\beta} 
\]
and
\[
\nabla \nabla v=\frac{1}{v}\sum_{\alpha,\beta}( V_{\alpha\beta}\nabla \nabla V_{\alpha\beta}+\nabla V_{\alpha\beta}\nabla V_{\alpha\beta})-\frac{1}{v^3}\left(\sum_{\alpha,\beta} V_{\alpha\beta}\nabla V_{\alpha\beta}\right)\left(\sum_{\gamma,\delta} V_{\gamma\delta}\nabla V_{\gamma\delta}\right).
\]
Taking the trace and rearranging 
\begin{align}
v\left(\frac{d}{ds}-\triangle\right)v=&\sum_{\alpha,\beta}V_{\alpha\beta}\left(\frac{d}{ds}-\triangle\right)V_{\alpha\beta}\nonumber\nonumber\\
  &+\frac{1}{v^2}\sum_{\alpha,\beta,\gamma,\delta}\left[(V_{\alpha\beta}\nabla V_{\alpha\beta})\cdot (V_{\gamma\delta}\nabla V_{\gamma\delta}) -
  (V_{\alpha\beta}V_{\alpha\beta}) (\nabla V_{\gamma\delta}\cdot \nabla V_{\gamma\delta})\right] \nonumber.
\end{align}
The expression in the square bracket is non-positive since for scalars $a_k$ and vectors $v_k$ in an inner product space we have 
\begin{align}
\sum_k a_k v_k \cdot \sum_l a_l v_l &= \sum_{k,l}a_k a_l v_k \cdot v_l
\le \frac{1}{4} \sum_{k,l}(a_k^2 + a_l^2)(|v_k|^2 + |v_l|^2) 
\nonumber\\&= \sum_k a_k^2 |v_k|^2 + \frac{1}{2} \sum_{k \neq l}a_k^2 |v_l|^2
\le \left(\sum_k a_k^2\right)\left(\sum_l |v_l|^2\right)\nonumber.
\end{align}
We conclude that
\[
v\left(\frac{d}{ds}-\triangle\right)v\leq V_{\alpha\beta}\left(\frac{d}{ds}-\triangle\right)V_{\alpha\beta}.
\]
Now contracting the second equation of Proposition \ref{p:laps} with $V_{\alpha\beta}$ yields the claim.
\end{proof}

\vspace{0.1in}

\begin{Prop}\label{p:gradest}
Assume that ${\mathbb M}$ satisfies the timelike curvature condition (\ref{e:tcc}). Let $\Sigma_s$ be a smooth solution of the initial value problem on the interval $0\leq s<s_0$ such that $\Sigma_s$ is contained in a compact subset of ${\mathbb M}$ for all $0\leq s<s_0$. Then the function $ v$ satisfies the a priori estimate
\[
 v(p,s)\leq(m+\sup_{\Sigma\times 0} v)\sup_{(q,s)\in \Sigma\times[0,s_0]}\exp[K(u(q,s)-u(p,s))],
\]
for some positive constant K$(n,m,\|t\|_3,|\psi|,\|\overline{R}\|,|H|,k)$, where $u=\sum_\alpha u_\alpha$.
\end{Prop}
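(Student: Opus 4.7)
The plan is to apply a parabolic maximum principle to the auxiliary barrier $\phi := v\exp(Ku)$ for a constant $K>0$ to be chosen large. If one can establish $L\phi \leq 0$ (with $L := d/ds - \Delta$) at every interior spacetime maximum of $\phi$, then $\sup_{\Sigma\times[0,s_0]}\phi \leq \sup_{\Sigma\times 0}\phi$, and dividing by $\exp(Ku(p,s))$, together with the trivial inequality $\sup_0 v \leq m + \sup_0 v$, yields exactly the claimed bound. The additive $m$ accommodates the identity $v^2 \geq m$ (a consequence of $D_2^2 = I_m + D_4^2$ in Proposition \ref{p:gauge}) and supplies the small amount of slack needed for the final form of the estimate.

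The analytic core is to extract from Proposition \ref{p:flowuv} a usable upper bound on $Lv$ at an interior maximum $p_0$ of $\phi$. I would choose the adapted frame so that three conditions hold simultaneously at $p_0$: first, $\nabla_{\tau_i}\tau_j(p_0)=0$ (geodesic frame on $\Sigma$); second, the normal frame is $\overline{\nabla}^\perp$-parallel at $p_0$, so $C_{i\alpha}^\beta(p_0)=0$; and third, by Proposition \ref{p:gauge}, $V_{\alpha\beta}(p_0)=\mu_\alpha\delta_{\alpha\beta}$ with $\mu_\alpha^2\geq 1$ and $\sum_\alpha\mu_\alpha^2=v^2$. The second choice kills both the linear and quadratic normal-connection terms of (\ref{e:muflow}) pointwise at $p_0$, while the third diagonalizes the contractions $V^{\alpha\beta}V_{\gamma\beta}$. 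Applying the timelike curvature condition (\ref{e:tcc}) to each unit timelike vector $\nu_\alpha$ (orthogonal to the spacelike plane spanned by $\{\tau_i\}$) bounds the curvature term $-V^{\alpha\beta}V_{\gamma\beta}\langle\overline{R}(\tau_i,\nu_\gamma)\tau_i,\nu_\alpha\rangle$ above by $kv^2$, and the second fundamental form term contributes the non-positive \emph{good} quantity $-\sum_\alpha\mu_\alpha^2|A_\alpha|^2 \leq -|A|^2$ available for absorption.

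The remaining ambient pieces built from $\mathcal{L}_{T_\beta}\mathbb{G}$, $\overline{\nabla}\mathcal{L}_{T_\beta}\mathbb{G}$, $\overline{\nabla}T_\beta$, and $H$ are bounded via the compactness hypothesis together with Proposition \ref{p:frameest}: evaluating any fixed ambient tensor on the adapted frame costs at most a factor of $Cv^\ell$ per slot. The lone term linear in $|A|$, namely $A^{ij}_\alpha\mathcal{L}_{T_\beta}g_{ij}V^{\alpha\beta}$, is controlled by Young's inequality and absorbed into the good $-|A|^2$ term, at the cost of additional $v^\ell$ contributions. The upshot is a pointwise inequality of the form $Lv \leq K_1 v(1+v)$ at $p_0$, with $K_1$ depending only on $n,m,\|t\|_3,|\psi|,\|\overline{R}\|,|H|,k$.

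To close, the first-order condition $\nabla v = -Kv\nabla u$ at $p_0$ feeds into the identity $L\phi|_{p_0} = \exp(Ku)(Lv + Kv\,Lu + K^2v|\nabla u|^2)$; combined with $|Lu|\leq Cv^2$ from (\ref{e:uflow}) and $|\nabla u|^2 \leq C(v^2-m)$ from Proposition \ref{p:slice}, enlarging $K$ appropriately in terms of $K_1$ forces $L\phi|_{p_0}\leq 0$, completing the maximum-principle step. The hardest step, genuinely new in higher codimension relative to the Ecker--Huisken/Bartnik codimension-one argument, is handling the normal-connection terms of (\ref{e:muflow}); the second gauge choice above disposes of them pointwise at $p_0$, but one must verify that this gauge is compatible with the diagonalization in choice three (it is, since parallel-transport freezing of the normal frame at $p_0$ still leaves the identity-at-$p_0$ rotations free for diagonalization), and that the Young absorption of the linear-$|A|$ term into the good $-|A|^2$ produces a dependence on $v$ mild enough for the chosen $K$ to make the bracket at $p_0$ negative.
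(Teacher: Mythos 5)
Your setup — the test function $\phi = v\exp(Ku)$, the three gauge choices at the spacetime maximum (geodesic tangent frame, $\overline{\nabla}^\perp$-parallel normal frame so $C_{i\alpha}^\beta(p_0)=0$, diagonalized $V$), the use of the timelike curvature condition on each $\nu_\alpha$, and the final contradiction argument — all follow the paper's strategy. But the analytic core does not close as stated. At the first interior maximum, the maximum principle gives $L\phi\ge 0$, and you correctly reduce to
\[
L\phi|_{p_0}=e^{Ku}\bigl(Lv + Kv\,Lu + K^2 v|\nabla u|^2\bigr).
\]
If your bound on $Lv$ is of the form $Lv\le K_1\,v(1+v)$, with no gradient term, then the right-hand side is a sum of three terms each of which is either $O(K)$ or $O(K^2)$ with a \emph{positive} coefficient on $K^2 v|\nabla u|^2$. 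There is nothing to fight the $K^2$ term; enlarging $K$ only makes the expression more positive and yields no contradiction. Absorbing the linear-in-$A$ term into the good $-\sum_\alpha V_\alpha^2|A_\alpha|^2$ via Young and then discarding the remainder throws away exactly the structure the argument needs.

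The crucial missing step — genuinely the heart of Bartnik's estimate and of the paper's proof — is to convert the good second-fundamental-form term into a $-|\nabla v|^2$ term with a coefficient \emph{strictly greater than $1$}. One computes $v\,\nabla_i v = V^{\alpha\beta}\nabla_i V_{\alpha\beta}$ using $\nabla_i V_{\alpha\beta}=-A^j_{i\alpha}W_{j\beta}-\langle\nu_\alpha,\overline{\nabla}_iT_\beta\rangle$ and the relation $W^TW=V^TV - I_m$ from (\ref{e:onm1a}), which gives $\sum_\alpha V_\alpha^2\lambda_\alpha^2\ge \tfrac{1}{1+\epsilon}|\nabla v|^2 - C v^2$, where $\lambda_\alpha$ is the largest eigenvalue of $A_\alpha$. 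On its own this would only produce $Lv\le -|\nabla v|^2/v+\dots$, which at the critical point ($|\nabla v|^2=K^2v^2|\nabla u|^2$) exactly cancels the bad $+K^2v|\nabla u|^2$ term and yields nothing. The decisive margin comes from the elementary eigenvalue inequality $\sum_i\lambda_i^2\ge (1+\tfrac{1}{n})\lambda_1^2-(\sum_i\lambda_i)^2$, giving $\sum_\alpha V_\alpha^2|A_\alpha|^2\ge (1+\tfrac{1}{n})\sum_\alpha V_\alpha^2\lambda_\alpha^2 - \sum_\alpha H_\alpha^2 V_\alpha^2$. Chaining the two produces $Lv\le -(1+\tfrac{1}{n})\tfrac{1-\epsilon}{1+\epsilon}K^2 v|\nabla u|^2 + C_7 v + C_4 v^3$; since $(1+\tfrac{1}{n})\tfrac{1-\epsilon}{1+\epsilon}>1$ for $\epsilon<1/(1+2n)$, the quadratic-in-$K$ terms now combine with a strictly negative coefficient. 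Also note that you want the \emph{lower} bound $|\nabla u|^2\ge\min_\alpha\psi_\alpha^{-2}(v^2-m)$ from Proposition~\ref{p:slice}, not an upper bound, to convert the resulting negative $K^2$ piece into a statement forcing $v^2$ down toward $m$ as $K\to\infty$, contradicting $v\ge m+1$. Without the $(1+1/n)$ margin and the $|\nabla v|^2$ bookkeeping, the estimate does not close.
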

\begin{proof}
The argument is an extension of Bartnik's estimate in the stationary case \cite{Bart} to the parabolic case in higher codimension.

Let K$>$0 be a constant to be determined later and set
\[
C_K=(m+\sup_{\Sigma\times 0} v)\sup_{\Sigma\times[0,s_0]}\exp(Ku).
\]
Consider the test function $h= v \exp(Ku)$. Suppose, for the sake of contradiction, that the function $h_K$ reaches $C_K$ for the first time at $(p_1,s_1)\in \Sigma\times(0,s_0]$. Then at this point $v\geq m+1$ and by the maximum principle
\[
\left(\frac{d}{ds}-\triangle\right) h_K{\geq}0
\qquad\qquad \nabla h_K{=}0.
\]
Here and throughout we evaluate all quantities at the point $(p_1,s_1)$. Moreover, for quantities that depend on normal indices, we choose an adapted orthonormal frame $\{\nu_\alpha\}_1^m$ which diagonalizes the matrix $V$: $V_{\alpha\beta}=V_\alpha\delta_{\alpha\beta}$ at $(p_1,s_1)$. 

Working out these two equations we have
\begin{equation}\label{e:atmax1}
\left(\frac{d}{ds}-\triangle\right)  v +K v
\left(\frac{d}{ds}-\triangle\right) u-2K\nabla u\cdot\nabla v -K^2
v |\nabla u|^2{\geq}0,
\end{equation}
\begin{equation}\label{e:atmax2}
\nabla  v +K v \nabla u{=}0.
\end{equation}
Substituting the second of these in the first we obtain
\begin{equation}\label{e:atmax3}
K v \left(\frac{d}{ds}-\triangle\right)
u{\geq}-\left(\frac{d}{ds}-\triangle\right) v -K^2
v |\nabla u|^2.
\end{equation}
From Proposition \ref{p:flowuv} and the estimates in
Proposition \ref{p:frameest}
\begin{equation}\label{e:boxuest}
\left(\frac{d}{ds}-\triangle\right)
u_\gamma=-g^{ij}\overline{\nabla}_i\overline{\nabla}_j t_\gamma\leq
\|\overline{\nabla}_i\overline{\nabla}_j
t_\gamma\|.\|\tau_i\|.\|\tau_j\|\leq C_1(n,m,\|t\|_2) v^2.
\end{equation}

We now simplify and estimate the terms that arise on the right hand side of equation (\ref{e:muflow}). At $p_1$ we may set $C_{i\alpha}^{\;\;\;\beta}=0$ and utilise the frame choice at $(p_1,s_1)$ mentioned above $V_{\alpha\beta}=V_\beta\delta_{\alpha\beta}$. Thus
\[
-\sum_{\alpha,\beta,\gamma}V^{\alpha\beta}V_{\gamma\beta}A_{ij\gamma}A^{ij}_\alpha=-\sum_{\alpha}V_\alpha^2|A_\alpha|^2,
\]
\[
\sum_{\alpha,\beta}A^{ij}_\alpha {\mathcal L}_{T_\beta}g_{ij}V^{\alpha\beta}\leq\sum_{\alpha}C_2(\|T\|_1)|A_\alpha|V_\alpha,
\]
\[
-{\textstyle{\frac{1}{2}}}\sum_{\alpha}(\overline{\nabla}{\mathcal{L}}_{T_\beta}{\mathbb G})(\nu_\alpha,\tau_i,\tau_i)V^{\alpha\beta}
    +\sum_{\alpha}(\overline{\nabla}{\mathcal{L}}_{T_\beta}{\mathbb G})(\tau_i,\nu_\alpha,\tau_i)V^{\alpha\beta}\leq C_3(n,m,\|T\|_2)v^4.
\]
Assembling this with the timelike curvature condition (\ref{e:tcc}) yields
\begin{align}
v\left(\frac{d}{ds}-\triangle\right)v{\leq}&
-\sum_{\alpha}V_\alpha^2|A_\alpha|^2+C_2|A_\alpha|V_\alpha+C_3v^4\nonumber\\
\leq&-(1-\epsilon)\sum_{\alpha}V_\alpha^2|A_\alpha|^2+C_4(\epsilon,n,m,\|T\|_2)v^4\label{e:eq11},
\end{align}
for any choice of $\epsilon>0$. 

Here the last inequality uses Young's inequality:
\begin{equation}\label{e:young}
ab\leq\frac{\epsilon a^2}{2}+\frac{b^2}{2\epsilon}.
\end{equation}

Now, for any symmetric matrix $M$ with eigenvalues $\lambda_i$, $i = 1,...,n $, we have the following inequalities
\[
\|M\|^2=\sum_{i=1}^n\lambda_i^2\geq\lambda_1^2+\frac{1}{n-1}\left(\sum_{i=2}^n\lambda_i\right)^2\geq\left(1+\frac{1}{n}\right)\lambda_1^2-\left(\sum_{i=1}^n \lambda_i\right)^2.
\]
The first inequality follows from the fact that
\[
\sum_{i=2}^n\lambda_i^2\geq \frac{1}{n-1}\left(\sum_{i=2}^n\lambda_i\right)^2
\]
while to prove the second inequality, let $a=\lambda_1$ and $b=\sum_{i=2}^n\lambda_i$, and compute
\begin{align}
(a+b)^2+\frac{1}{n-1}b^2-\frac{1}{n}a^2&=a^2+2ab+b^2+\frac{1}{n-1}b^2-\frac{1}{n}a^2\nonumber\\
&=2ab+\frac{n}{n-1}b^2-\frac{1-n}{n}a^2\nonumber\\
&=\frac{n}{n-1}\left(b^2+\frac{n-1}{n}a\right)^2+\frac{1-n}{n}a^2-\frac{1-n}{n}a^2\nonumber\\
&=\frac{n}{n-1}\left(b^2+\frac{n-1}{n}a\right)^2\geq 0,\nonumber
\end{align}
which implies that
\[
\left(\sum_{i=1}^n \lambda_i\right)^2+\frac{1}{n-1}\left(\sum_{i=2}^n\lambda_i\right)^2-\frac{1}{n}\lambda_1^2\geq0,
\]
as claimed.

Applying this to our case, this gives
\begin{equation}\label{e:ineq1}
\sum_{\alpha}V_\alpha^2|A_\alpha|^2\geq\sum_{\alpha}\left(1+\frac{1}{n}\right)\lambda_\alpha^2V_\alpha^2-H_\alpha^2V_\alpha^2,
\end{equation}
where $\lambda_\alpha$ is the eigenvalue of $A_{ij\alpha}$ with the maximum absolute value, so that in an eigenframe $A_{ij\alpha}\leq|\lambda_\alpha|\delta_{ij}$.

On the other hand we compute
\[
\nabla_{\tau_i}V_{\alpha\beta}=-A_{i\alpha}^j<\tau_j,T_\beta>-<\nu_\alpha,\overline{\nabla}_iT_\beta>,
\]
and so
\[
v\nabla_{\tau_i}v=V^{\alpha\beta}\nabla_{\tau_i}V_{\alpha\beta}=-A_{i\alpha}^jW_{j\beta}V^{\alpha\beta}-<\nu_\alpha,\overline{\nabla}_iT_\beta>V^{\alpha\beta}.
\]
The square norm is
\begin{align}
v^2|\nabla v|^2&=v^2\nabla_{\tau_i}v\nabla^iv\nonumber\\
&=\left(A_{i\alpha}^jW_{j\beta}+<\nu_\alpha,\overline{\nabla}_iT_\beta>\right)
\left(A_{\gamma}^{ik}W_{k\delta}+<\nu_\gamma,\overline{\nabla}^iT_\delta>\right)V^{\alpha\beta}V^{\gamma\delta}\nonumber
\\
&=A_{i\alpha}^jA_{\gamma}^{ik}W_{j\beta}W_{k\delta}V^{\alpha\beta}V^{\gamma\delta}+2A_{i\alpha}^jW_{j\beta}<\nu_\gamma,\overline{\nabla}^iT_\delta>V^{\alpha\beta}V^{\gamma\delta}\nonumber\\
&\qquad\qquad+<\nu_\alpha,\overline{\nabla}_iT_\beta><\nu_\gamma,\overline{\nabla}^iT_\delta>V^{\alpha\beta}V^{\gamma\delta}.\nonumber
\end{align}
Take these three summands separately, computing in a tangent eigenframe (so that $A_{ij\alpha}\leq|\lambda_\alpha|\delta_{ij}$). The first term is
\begin{align}
A_{i\alpha}^jA_{\gamma}^{ik}W_{j\beta}W_{k\delta}V^{\alpha\beta}V^{\gamma\delta}&\leq |\lambda_\alpha\lambda_\gamma| .|W^k_{\beta}W_{k\delta}
V^{\alpha\beta}V^{\gamma\delta}|\nonumber\\
&=|\lambda_\alpha\lambda_\gamma| .|\left(V^\rho_{\beta}V_{\rho\delta}-\delta_{\beta\gamma}\right)V^{\alpha\beta}V^{\gamma\delta}|\nonumber\\
&=\sum_{\alpha}\lambda^2_\alpha\left(V_{\alpha}^2-1\right)V^2_{\alpha}\nonumber\\
&\leq v^2\sum_{\alpha}\lambda^2_\alpha V^2_{\alpha},\nonumber
\end{align}
where we have used the relationship between the matrices $W$ and $V$ given in the middle of equations (\ref{e:onm1a}). Note that this equation implies $\|W_\beta\|^2=V_\beta^2-1\leq v^2-1\leq v^2$.

For the second term, again computing in an eigenframe for $V^{\alpha\beta}$,
\begin{align}
2A_{i\alpha}^jW_{j\beta}<\nu_\gamma,\overline{\nabla}^iT_\delta>V^{\alpha\beta}V^{\gamma\delta}
&\leq 2|\lambda_{\alpha}|.|W_{i\beta}<\nu_\gamma,\overline{\nabla}^iT_\delta>V^{\alpha\beta}V^{\gamma\delta}|\nonumber\\
&=2\sum_{\alpha,\gamma}|\lambda_{\alpha}|.|W_{i\alpha}<\nu_\gamma,\overline{\nabla}^iT_\gamma>|.|V_{\alpha}V_{\gamma}|\nonumber\\
&\leq 2\sum_{\alpha,\gamma}|\lambda_{\alpha}|\|W_{\alpha}\|.\|\nu_\gamma\|.\|\overline{\nabla}T_\gamma\|.|V_{\alpha}V_{\gamma}|\nonumber\\
&\leq 4m^{\scriptstyle{\frac{1}{2}}}v^2\|T\|_1\sum_{\alpha,\gamma}|\lambda_{\alpha}|.|V_{\alpha}V_{\gamma}|\nonumber,
\end{align}
where we use $\|W_\beta\|^2\leq v^2$ and  $\|\nu_\gamma\|^2\leq 2mv^2$ from Proposition \ref{p:frameest}.

For each $\alpha$ we use Young's inequality with $a=v\lambda_{\alpha}|V_{\alpha}|$ and $b=2m^{\scriptstyle{\frac{1}{2}}}v\|T\|_1\sum_\gamma |V_{\gamma}|$ to conclude the second estimate
\[
2A_{i\alpha}^jW_{j\beta}<\nu_\gamma,\overline{\nabla}^iT_\delta>V^{\alpha\beta}V^{\gamma\delta}
\leq\epsilon\sum_\alpha v^2\lambda^2_{\alpha}V^2_{\alpha}+4m\epsilon^{-1}\|T\|_1^2v^4
\]
The final term is easily estimated in a similar manner
\[
<\nu_\alpha,\overline{\nabla}_iT_\beta><\nu_\gamma,\overline{\nabla}^iT_\delta>V^{\alpha\beta}V^{\gamma\delta}\leq C_5(m,\|T\|_1)v^4.
\]
Putting these last three estimates together and cancelling the $v^2$ factor we bound the square norm:
\[
|\nabla v|^2\leq(1+\epsilon)\sum_{\alpha}V_\alpha^2\lambda_\alpha^2+C_6(\epsilon,m,\|T\|_1)v^2
\] 
or, rearranging
\begin{equation}\label{e:ineq2a}
\sum_{\alpha}V_\alpha^2\lambda_\alpha^2\geq \frac{1}{1+\epsilon}|\nabla v|^2-C_6v^2.
\end{equation}
Combining inequalities (\ref{e:ineq1}) and (\ref{e:ineq2a}) we obtain
\[
\sum_{\alpha}V_\alpha^2|A_\alpha|^2\geq\left(1+\frac{1}{n}\right)\left[\frac{1}{1+\epsilon}|\nabla
v|^2-C_6v^2\right]-\sum_{\alpha}H_\alpha
^2V_\alpha^2,
\]
which, when substituted in inequality (\ref{e:eq11}), gives
\[
v\left(\frac{d}{ds}-\triangle\right)v{\leq}
-\left(1+\frac{1}{n}\right)\frac{1-\epsilon}{1+\epsilon}|\nabla v|^2+C_7(\epsilon,n,m,|H|,\|T\|_1)v^2+C_4v^4,
\]
and, by virtue of equation (\ref{e:atmax2}),
\[
|\nabla v|^2{=}K^2v^2|\nabla u|^2,
\]
yielding
\begin{equation}\label{e:boxvest}
\left(\frac{d}{ds}-\triangle\right)v{\leq}
-\left(1+\frac{1}{n}\right)\frac{1-\epsilon}{1+\epsilon}K^2v|\nabla u|^2+C_7v+C_4v^3.
\end{equation}
Substituting inequalities (\ref{e:boxuest}) and (\ref{e:boxvest}) in (\ref{e:atmax3}) we get
\[
mKC_1v^2{\geq} \left[\left(1+\frac{1}{n}\right)\frac{1-\epsilon}{(1+\epsilon)}-1\right]K^2|\nabla u|^2-C_7-C_4v^2,
\]
for any $\epsilon>0$.

Now for $0<\epsilon<1/(1+2n)$
\[
\left(1+\frac{1}{n}\right)\frac{1-\epsilon}{1+\epsilon}-1>0,
\]
and so using Proposition \ref{p:slice}
\[
|\nabla u|^2=\sum_{\alpha,\beta}\nabla u_\alpha\cdot\nabla u_\beta \geq{\mbox{min}}_\alpha\psi_\alpha^{-2}(v^2-m),
\]
we have
\[
mKC_1v^2{\geq} C_8(\epsilon,n,|\psi|)K^2(v^2-m)-C_7-C_4v^2,
\]
which can be rearranged to
\[
v^2{\leq}\frac{mC_8K^2+C_7}{C_8K^2-mC_1K-C_4},
\]
where, in summary, $C_1(n,m,\|t\|_2)$, $C_4(\epsilon,n,m,\|T\|_2)$, $C_7(\epsilon,n,m,|H|,\|T\|_1)$ and $C_8(\epsilon,n,|\psi|)$.

For large $K$ this inequality violates $ v\geq m+1$ and we have a contradiction, thereby proving that $h_K<C_K$ in $\Sigma\times [0,s_0)$ and indeed the claim.
\end{proof}
\vspace{0.2in}

\section{Proof of Theorem \ref{t:1}}

For tensors $H_\alpha$ and $A_{ij\alpha}$ we define a positive norm by
\[
|H|_+^2=-H_\alpha H^\alpha \qquad\qquad |A|_+^2=-A_{ij\alpha}A^{ij\alpha},
\]
and similarly for their gradients.

\vspace{0.1in}

\begin{Prop}
Under the mean curvature flow, the norms of the mean curvature vector and the second fundamental form of a spacelike m-dimensional submanifold in an indefinite m+n-dimensional manifold evolve according to:
\[
\left(\frac{d}{ds}-\triangle\right)
|H|_+^2=-2|\tilde{\nabla}H|_+^2-2|H\cdot A|_+^2-2H^\alpha H^\beta\bar{R}_{i\alpha i\beta},
\]
\[
\left(\frac{d}{ds}-\triangle\right)
|A|_+^2=-2|\tilde{\nabla}A|_+^2-2|A|_+^4+A*A*\overline{R}+A*\overline{\nabla}\;\overline{R},
\]
where $\tilde{\nabla}$ is the covariant derivative in both the tangent and normal bundles and $*$ represents linear combinations of contractions
of the tensors involved.
\end{Prop}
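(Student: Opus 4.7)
The plan is to derive these two Simons-type identities by the standard three-step recipe (compute the time derivative of the tensor under the flow, compute its Laplacian via Codazzi/Simons, then combine) adapted to the indefinite ambient metric. The main novelty is sign bookkeeping: because $\nu_\alpha$ is timelike, raising any Greek index introduces a minus, which is precisely why the ``positive'' norms of Definition~\ref{d:norm} carry a leading sign in $|H|_+^2 = -H_\alpha H^\alpha$ and $|A|_+^2 = -A_{ij\alpha}A^{ij\alpha}$.

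First I would record the variational formulas under $\frac{df}{ds}=H$. Since $H$ is normal, differentiating $g_{ij}={\mathbb G}(\tau_i,\tau_j)$ along the flow and using (\ref{e:connsplit1})--(\ref{e:normconn}) gives $\frac{d}{ds}g_{ij} = -2H^\alpha A_{ij\alpha}$, with the sign determined by the timelike character of $\nu_\alpha$. A parallel computation for the adapted frame, using that $\frac{d\nu_\alpha}{ds}$ must remain normal and orthonormal, produces tangential and normal-connection terms proportional to $\tilde\nabla H$. From these one extracts
\[
\frac{d}{ds}A_{ij\alpha}=\tilde\nabla_i\tilde\nabla_j H_\alpha + A*A*H + H^\beta\,\overline R_{i\alpha j\beta}+\cdots,
\]
where $\tilde\nabla$ is the full connection on the tangent-normal tensor bundle and the dots denote additional curvature contractions. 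Tracing over $i,j$ yields the companion equation for $H_\alpha$.

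Next I would derive Simons' identity in the present setting. Starting from Codazzi,
\[
\tilde\nabla_i A_{jk\alpha}-\tilde\nabla_j A_{ik\alpha}=\overline R(\tau_i,\tau_j,\tau_k,\nu_\alpha),
\]
differentiate once more, contract with $g^{ij}$, and commute covariant derivatives. Commutators produce curvature contractions of the schematic type $A*A*A$, $A*\overline R$ and $\overline\nabla\overline R$, giving
\[
\triangle A_{ij\alpha}=\tilde\nabla_i\tilde\nabla_j H_\alpha + A*A*A + A*\overline R + \overline\nabla\overline R.
\]
Subtracting this from the evolution of $A_{ij\alpha}$ eliminates the $\tilde\nabla\tilde\nabla H$ term, leaving
\[
\Bigl(\tfrac{d}{ds}-\triangle\Bigr)A_{ij\alpha}=A*A*A+A*\overline R+\overline\nabla\overline R.
\]

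Finally I would contract. The identity
\[
\Bigl(\tfrac{d}{ds}-\triangle\Bigr)\bigl(-A_{ij\alpha}A^{ij\alpha}\bigr)=-2A^{ij\alpha}\Bigl(\tfrac{d}{ds}-\triangle\Bigr)A_{ij\alpha}-2|\tilde\nabla A|_+^2
\]
(valid because the $+$-norm squares one raised-index factor, producing a uniform sign) yields the stated equation for $|A|_+^2$; the cubic-in-$A$ term contracted once more with $A$ gives $-2|A|_+^4$, the sign again coming from two raised Greek indices. The analogous contraction for $H$, together with the traced equation $(\frac{d}{ds}-\triangle)H_\alpha=-H^\beta A_{ij\beta}A^{ij}{}_\alpha+H^\beta \overline R_{i\alpha i\beta}$, produces the first identity.

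The main obstacle is not conceptual but combinatorial: matching every sign that arises from raising Greek indices, from the normal-connection terms $C_{i\alpha}^\beta$ appearing when $\nabla$ is distinguished from $\tilde\nabla$, and from commuting $\overline\nabla$ against $\tilde\nabla$ in the Simons step. Since both identities are direct generalizations to higher codimension of the Ecker--Huisken formulas in \cite{EaH} and of the Riemannian computations in \cite{AandS,Sm,Wang2}, I would expect the argument to be routine once the sign conventions in Section~\ref{s:setting} are consistently applied.
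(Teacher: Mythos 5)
The paper does not actually prove this Proposition; its ``proof'' is a one-line citation to Proposition~4.1 of \cite{LaS} (which generalizes Proposition~3.3 of \cite{EaH}). Your outline reconstructs the standard derivation that appears in those sources: compute the first variation of $g_{ij}$ and $A_{ij\alpha}$ under the flow, apply Codazzi and Simons' identity to express $\triangle A$ in terms of $\tilde\nabla\tilde\nabla H$ plus curvature contractions, subtract to cancel the second-derivative-of-$H$ term, and contract with $A^{ij\alpha}$ (respectively $H^\alpha$) while tracking the signs that arise from raising the timelike normal index. This is precisely the route taken in \cite{LaS}, so your proposal is consistent with the intended proof rather than an alternative to it. Two small cautions: you should verify that the sign in $\frac{d}{ds}g_{ij}$ is indeed as you wrote (in the spacelike/indefinite setting it reverses relative to the Riemannian case, and the paper's convention $A_{ij\alpha}={\mathbb G}(\overline\nabla_{\tau_i}\nu_\alpha,\tau_j)$ plus the definition $|A|_+^2=-A_{ij\alpha}A^{ij\alpha}$ is what makes the gradient terms come out with the correct leading $-2$); and the schematic $A*A*H$ and $A*A*A$ terms must, after contraction, reduce exactly to $-2|H\cdot A|_+^2$ and $-2|A|_+^4$ with no leftover reaction terms of the wrong sign, which is the genuinely delicate part of the bookkeeping you correctly flag. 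Given that the paper leans entirely on the citation, writing out those contractions carefully is exactly the work that would be needed to make the proof self-contained.
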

\begin{proof}
These are proven in Proposition 4.1 of \cite{LaS}, generalizing the expressions in Proposition 3.3 of \cite{EaH}.
\end{proof}
\vspace{0.1in}

\begin{Prop}\label{p:bdsff}
Under the mean curvature flow
\[
|H|_+^2\leq C_1(1+s^{-1}),
\]
\[
|A|_+^2\leq C_2(1+s^{-1}),
\]
where $C_1=C_1(n,k)$ and $C_2=C_2(n,\|\overline{R}\|_1)$, $k$ being the constant in the timelike curvature condition (\ref{e:tcc}).
\end{Prop}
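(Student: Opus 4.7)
The plan is to apply the parabolic maximum principle to Bernstein-type test functions of the form $\varphi(s)|A|_+^2$ and $\varphi(s)|H|_+^2$, chosen so that the smoothing rate $1+1/s$ appears naturally. The good leading quartic term $-2|A|_+^4$ in the $|A|_+^2$ evolution controls $|A|_+^2$ directly. For $|H|_+^2$ the analogous quartic term will be extracted from $-2|H\cdot A|_+^2$ via the Kato-type inequality $|H|_+^4 \leq n|H\cdot A|_+^2$, which follows from the trace identity $g^{ij}(H\cdot A)_{ij}=|H|_+^2$ by Cauchy--Schwarz.

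First I would reduce the two evolution equations of the preceding proposition to the standard form
\begin{align*}
\left(\tfrac{d}{ds}-\triangle\right)|A|_+^2 &\leq -2|A|_+^4 + C_1|A|_+^2 + C_0,\\
\left(\tfrac{d}{ds}-\triangle\right)|H|_+^2 &\leq -\tfrac{2}{n}|H|_+^4 + 2k|H|_+^2,
\end{align*}
where in the first line the curvature terms $A{*}A{*}\bar R$ and $A{*}\bar\nabla\bar R$ are bounded by $C\|\bar R\||A|_+^2$ and $\tfrac12|A|_+^2+\tfrac12\|\bar\nabla\bar R\|^2$ (Young), giving $C_0,C_1$ depending only on $n$ and $\|\bar R\|_1$; and in the second line the term $-2H^\alpha H^\beta \bar R_{i\alpha i\beta}$ is controlled by applying the timelike curvature condition with $X=\vec H$, while $-2|H\cdot A|_+^2 \leq -(2/n)|H|_+^4$ is the Kato inequality above.

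Next I would take $\varphi(s)=s/(1+s)$, so that $\varphi(0)=0$ and $0\leq\varphi,\varphi'\leq1$, and apply the maximum principle to $\varphi|A|_+^2$ and $\varphi|H|_+^2$ on $\Sigma\times[0,s_0)$. By compactness of $\Sigma$ and the vanishing of $\varphi$ at $s=0$, any spacetime maximum is attained at some interior time $s_*>0$. There the maximum principle yields
\[
0\leq(\varphi'+C_1\varphi)|A|_+^2 - 2\varphi|A|_+^4 + C_0\varphi,
\]
and the quadratic solution for $|A|_+^2$, multiplied by $\varphi$, remains uniformly bounded in $s_*$ (using $\varphi,\varphi'\leq 1$ and $\varphi\sqrt{(\varphi'/\varphi+C_1)^2+8C_0}\leq 1+C_1+\sqrt{8C_0}$). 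This gives $\sup(\varphi|A|_+^2)\leq C_2$, and hence $|A|_+^2\leq C_2(1+1/s)$. The analogous argument applied to $\varphi|H|_+^2$ gives at the maximum $0\leq(\varphi'+2k\varphi)|H|_+^2-(2/n)\varphi|H|_+^4$, so $\varphi|H|_+^2\leq(n/2)(\varphi'+2k\varphi)\leq n(1/2+k)$ and therefore $|H|_+^2\leq C_1(n,k)(1+1/s)$.

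The main delicate point is verifying the sign of the curvature term $-2H^\alpha H^\beta \bar R_{i\alpha i\beta}$ under the paper's Riemann convention together with the index-raising rule in which a timelike index flips sign, so that the TCC produces the correct right-hand side $+2k|H|_+^2$ without which the quartic term would fail to dominate. The Kato trace identity $g^{ij}(H\cdot A)_{ij}=|H|_+^2$ underlying $|H|_+^4\leq n|H\cdot A|_+^2$ is equally sign-sensitive. Once these bookkeeping points are settled, the Bernstein argument runs as above and gives both estimates with constants that depend only on $n,k,\|\bar R\|_1$ and are independent of $s_0$.
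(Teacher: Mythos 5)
Your proposal is correct and follows essentially the same route as the paper: both reduce the evolution equations to the differential inequalities $(\tfrac{d}{ds}-\triangle)|H|_+^2\leq -\tfrac{2}{n}|H|_+^4+2k|H|_+^2$ and $(\tfrac{d}{ds}-\triangle)|A|_+^2\leq -|A|_+^4+C$ via the timelike curvature condition and a Cauchy--Schwarz trace bound. The only difference is that you carry out the Bernstein-type maximum-principle argument with the cutoff $\varphi(s)=s/(1+s)$ explicitly, whereas the paper simply cites Lemma 4.5 of Ecker--Huisken for that step (and note the small sign slip: $g^{ij}H^\alpha A_{ij\alpha}=H^\alpha H_\alpha=-|H|_+^2$, which is harmless once squared).
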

\begin{proof}
From the previous proposition and the timelike curvature condition
we conclude that
\[
\left(\frac{d}{ds}-\triangle\right) |H|_+^2\leq-2n^{-1}|H|_+^4+2k|H|_+^2,
\]
while
\[
\left(\frac{d}{ds}-\triangle\right) |A|_+^2\leq-2|A|_+^4+C_3|A|_+^2+C_4|A|_+\leq-|A|_+^4+C_5.
\]
The result then follows by a suitable modification of Lemma 4.5 of \cite{EaH}.
\end{proof}

\vspace{0.1in}

We now assemble the proof of Theorem \ref{t:1}:
\vspace{0.1in}

\begin{proof}
The flow is a quasilinear parabolic system and therefore short time existence follows from linear Schauder estimates and the contraction mapping theorem.

Having bounded the gradient and the second fundamental form in Propositions \ref{p:gradest} and \ref{p:bdsff}, bounds on the higher derivatives and 
long-time existence follow from standard parabolic bootstrapping arguments, as in \cite{EaH}.
\end{proof}
\vspace{0.1in}

\end{document}